\newcommand\F{{\mathbb F}}
\newcommand\Z{{\mathbb Z}}
\newcommand\cO{{\mathcal O}}
\newtheorem{theorem}{Theorem}[section]
\newtheorem{lemma}[theorem]{Lemma}
\newtheorem{corollary}[theorem]{Corollary}
\theoremstyle{definition}
\newtheorem{definition}[theorem]{Definition}
\theoremstyle{remark}
\numberwithin{equation}{section}
\begin{document}
\baselineskip=17pt

\title[Carmichael rings and Carmichael polynomials]{On the Carmichael rings, Carmichael ideals and Carmichael polynomials}
\author{Sunghan Bae}
\address{Department of Mathematical Sciences, Korea Advanced Institute of Science and Technology (KAIST), Daejeon 305-701, Republic of Korea}
\email{shbae@kaist.ac.kr}

\author{Su Hu}
\address{Department of Mathematics, South China University of Technology, Guangzhou 510640, China}
\email{mahusu@scut.edu.cn}

\author{Min Sha}
\address{Department of Computing, Macquarie University,
 Sydney, NSW 2109, Australia}
 \email{shamin2010@gmail.com} 
 
\subjclass[2010]{13A15, 11T06, 11R58, 11R60}
\keywords{Carmichael number, Carmichael ring, Carmichael ideal, Carmichael polynomial, Dedekind domain, function field}

\maketitle

\begin{abstract} 
Motivated by Carmichael numbers, 
we say that  a finite ring $R$ is a Carmichael ring if $a^{|R|}=a$ for any $a \in R$. 
We then call an ideal $I$ of a ring $R$ as a Carmichael ideal if $R/I$ is a Carmichael ring, 
and a Carmichael element of $R$ means it generates a Carmichael ideal. 
In this paper, we determine the structure of Carmichael rings 
and prove a generalization of Korselt's criterion for Carmichael ideals in Dedekind domains. 
We extend several results from the number field case to the function field case. 
Especially, we study Carmichael elements of polynomial rings over finite fields (called Carmichael polynomials) 
by generalizing some classical results. 
For example, we show that there are infinitely many Carmichael polynomials but they have zero density. 
\end{abstract}

\section{Introduction}

\subsection{Background and motivation}

By Fermat's Little Theorem, we know that if $p$ is a prime number,  
then $a^{p} \equiv a~ ({\rm mod}~ p)$ for any integer $a\in\mathbb{Z}$. 
However, the converse is not true, we call such exceptional integers as Carmichael numbers. 

\begin{definition}  \label{def:Car-num}
A composite integer $n$ is
called a \textit{Carmichael number} if $a^{n}\equiv a~({\rm mod}~ n)$ for any integer $a\in\mathbb{Z}$. 
\end{definition}

The first ten Carmichael numbers are (see the sequence A002997 in the OEIS \cite{Sloane}): 
$$
561, 1105, 1729, 2465, 2821, 6601, 8911, 10585, 15841, 29341. 
$$
One can completely characterize
all Carmichael numbers using Korselt's criterion.

\begin{theorem}[Korselt's criterion]   \label{thm:Korselt}
A composite integer $n$ is Carmichael if and only if $n$ is
square-free and $p-1 \mid n-1$ for any prime $p \mid n$.
\end{theorem}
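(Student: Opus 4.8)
The plan is to prove the two implications separately, in each case reducing questions about $n$ to questions about its prime divisors by the Chinese Remainder Theorem (CRT) and Fermat's Little Theorem.

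For the ``if'' direction, suppose $n$ is square-free, say $n = p_1 \cdots p_k$ with distinct primes $p_i$, and $p_i - 1 \mid n - 1$ for every $i$. By CRT it suffices to show $a^n \equiv a \pmod{p_i}$ for each integer $a$ and each $i$. Fix $p = p_i$: if $p \mid a$ then both sides are $0$ modulo $p$; otherwise Fermat's Little Theorem gives $a^{p-1} \equiv 1 \pmod p$, and writing $n - 1 = (p-1)m$ we get $a^{n-1} = (a^{p-1})^m \equiv 1 \pmod p$, so $a^n \equiv a \pmod p$. Reassembling over all $i$ gives $a^n \equiv a \pmod n$, so $n$ is a Carmichael number.

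For the ``only if'' direction, assume $n$ is composite with $a^n \equiv a \pmod n$ for all $a \in \mathbb{Z}$. Cancelling $a$ whenever $\gcd(a, n) = 1$ shows $a^{n-1} \equiv 1 \pmod n$ for all such $a$; equivalently, the exponent of the group $(\mathbb{Z}/n\mathbb{Z})^\times$ divides $n - 1$. First I would prove $n$ is square-free. Let $p$ be a prime with $p^e$ the exact power of $p$ dividing $n$, and suppose $e \geq 2$. Then $(\mathbb{Z}/n\mathbb{Z})^\times$ has $(\mathbb{Z}/p^e\mathbb{Z})^\times$ as a direct factor, and the exponent of the latter is divisible by $p$: it equals $p^{e-1}(p-1)$ when $p$ is odd, and it is even when $p = 2$ (as $e \geq 2$). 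Hence $p$ divides the exponent of $(\mathbb{Z}/n\mathbb{Z})^\times$, which divides $n - 1$; combined with $p \mid n$ this forces $p \mid \gcd(n, n-1) = 1$, a contradiction. So $n = p_1 \cdots p_k$ with distinct primes. Next, fix a prime $p = p_i \mid n$ and a primitive root $g$ modulo $p$; by CRT choose $a \in \mathbb{Z}$ with $a \equiv g \pmod p$ and $a \equiv 1 \pmod{p_j}$ for all $j \neq i$, so $\gcd(a, n) = 1$. Then $a^{n-1} \equiv 1 \pmod p$, and since $a$ has order $p - 1$ modulo $p$ we conclude $p - 1 \mid n - 1$. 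As $p$ was an arbitrary prime divisor of $n$, this finishes the proof.

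The only step beyond routine CRT-and-Fermat bookkeeping is the square-freeness argument in the converse, which uses the structure of the multiplicative group modulo a prime power — concretely, that $p$ divides the exponent of $(\mathbb{Z}/p^e\mathbb{Z})^\times$ once $e \geq 2$. The rest is immediate from the Chinese Remainder Theorem, Fermat's Little Theorem, and the cyclicity of $(\mathbb{Z}/p\mathbb{Z})^\times$.
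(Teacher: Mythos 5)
Your proof is correct, but it follows a different route from the paper. The paper never proves Theorem~\ref{thm:Korselt} directly: it states the criterion in the introduction and then obtains it (together with the Dedekind-domain version, Theorem~\ref{thm:Ded-Korselt}) as a special case of the structure theorem for Carmichael rings, Theorem~\ref{thm:Car-structure}. There, the key step is that a Carmichael ring has trivial Jacobson radical --- if $a$ lies in every maximal ideal, then $(1-a^{|R|-1})a=0$ with $1-a^{|R|-1}$ a unit, so $a=0$ --- whence $R\cong\F_{q_1}\times\cdots\times\F_{q_k}$ by the Chinese Remainder Theorem, and the divisibility $q_i-1\mid |R|-1$ is read off from $a^{|R|}=a$ in each factor field. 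Applied to $R=\Z/n\Z$, "product of fields" is exactly square-freeness of $n$, so the paper's argument gets square-freeness without any appeal to the structure of $(\Z/p^e\Z)^\times$; the price is proving a more abstract statement. Your argument is the classical elementary one: the forward direction via CRT and Fermat as in the paper's spirit, and the converse via the observation that the exponent of $(\Z/n\Z)^\times$ divides $n-1$, using that $p$ divides the exponent of $(\Z/p^e\Z)^\times$ when $e\ge 2$ and using a primitive root modulo $p$ lifted by CRT. Both are complete; the paper's buys the generalizations to Dedekind domains and arbitrary finite rings used throughout the rest of the article, while yours is self-contained at the level of elementary number theory. (A minor simplification available to you: since the Carmichael condition holds for \emph{all} $a$, you can apply it directly to a primitive root $g$ modulo $p$ and reduce modulo $p$, without first lifting $g$ to an element coprime to $n$.)
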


In 1953, Kn{\"o}del \cite{Knodel} gave an upper bound for the number of Carmichael numbers up to $x$, 
which was improved by Erd{\"o}s \cite{Erdos} later on. 
In 1994, Alford, Granville and  Pomerance \cite{AGP} proved that there exist
infinitely many Carmichael numbers by providing a lower bound; see \cite{Harman05,Harman08} for some further improvements. 
Moreover, Wright \cite{Wright13} showed that there are infinitely many Carmichael numbers in each arithmetic progression 
$a$ modulo $d$ for positive integers $a,d$ with $\gcd(a,d)=1$; see \cite{BP,Mat} for some previous results. 
Recently, Wright \cite{Wright16} proved that  for some fixed integer $m$, 
there are infinitely many Carmichael numbers with exactly $m$ prime factors; in fact, there are infinitely many such $m$. 

Recently, Steele \cite{Steele} generalized Carmichael numbers to
  ideals in number fields and proved a generalization of Korselt's
criterion for these Carmichael ideals.  
 He also showed that for any composite integer $n$, 
 there are infinitely many abelian number fields $K$
with discriminant relatively prime to $n$ such that $n$ does not generate a 
Carmichael ideal in $K$. 
Besides, Schettler \cite{Sch} generalized Carmichael numbers to elements in a principal ideal domain. 

In this paper, we want to generalize Carmichael numbers in a more general setting 
including the generalizations of Steele and Schettler as special cases, 
and then extend various classical or recent results about Carmichael numbers. 

\subsection{Our considerations}

Following the definition of  Carmichael number (Definition \ref{def:Car-num}), we first introduce Carmichael ring. 

\begin{definition}  \label{def:Car-ring}
A finite ring $R$ is called a \textit{Carmichael ring} if it is not a field 
and $a^{|R|}=a$ for any $a \in R$. 
\end{definition}

If $n$ is a Carmichael number, then by definition  the residue class ring $\Z/n\Z$ is a Carmichael ring. 
By  a classical result of Jacobson (see \cite[Theorem 11]{Jacobson}),
  Carmichael rings are automatically commutative rings. 

We shall determine the structure of Carmichael rings in Theorem~\ref{thm:Car-structure}, 
which can be viewed as a generalization of Korselt's criterion.  

It also seems naturally to introduce Carmichael ideal and Carmichael element of a ring. 

\begin{definition}  \label{def:Car-ideal}
An ideal $I$ of a ring $R$ is  said to be a \textit{Carmichael ideal} 
if  $R/I$ is a Carmichael ring. An element of $R$ is called a \textit{Carmichael element} if it generates a Carmichael ideal. 
\end{definition}

We prove a generalization of Korselt's criterion for Carmichael ideals in Dedekind domains in Theorem~\ref{thm:Ded-Korselt} 
and also study the  behavior of Carmichael ideals in the extensions of Dedekind domains. 

We then consider Carmichael elements in polynomial rings over finite fields and in function fields 
in Sections~\ref{sec:Car-pol} and \ref{sec:Car-ele} respectively. 

Throughout the paper, let $\F_q$ be the finite field of $q$ elements, and $\F_q[t]$ the polynomial ring of one variable over $\F_q$. 
Following Definition~\ref{def:Car-ideal}, a polynomial $g$ in $\F_q[t]$ is called a \textit{Carmichael polynomial} 
if $g$ generates a Carmichael ideal in $\F_q[t]$.  

We remark here that Hsu \cite{Hsu} introduced another concept of Carmichael polynomials by using Carlitz modules, 
which is also a generalization of Carmichael numbers. 
The difference is that when analogizing ``$a^{n}\equiv a~({\rm mod}~ n)$" for $\mathbb{F}_{q}[t]$, 
Hsu views the $n$ in $a^n$ as an element of the integer ring $\Z$ and $a^{n}$ as ``$n$ acts on $a$'', 
but we view it as the cardinality of the residue class ring $\Z/n\Z$.  

In this paper, we extend various results about Carmichael numbers to Carmichael polynomials. 
For example, we establish the Korselt criterion for these polynomials (see Theorem~\ref{thm:pol-Korselt}), 
and we obtain lower and upper bounds for the number of monic Carmichael polynomials of fixed degree 
(see Theorems~\ref{thm:lower} and \ref{thm:upper}). 
Then, one can see that they have zero density.  

Especially, we find two properties which do not hold for Carmichael numbers. 
The first one is that any square-free polynomial in $\F_q[t]$ is a factor of infinitely many Carmichael polynomials (see Theorem~\ref{thm:const}). 
The other is that any Carmichael polynomial $g$ remains Carmichael in any finite Galois extension over $\F_q(t)$ 
with discriminant relatively prime to $g$ (see Theorem~\ref{thm:ext-Car1}).

\section{Carmichael  ring} 

In this section, we determine the structure of Carmichael ring, 
which  implies  the classical Korselt's criterion  (Theorems \ref{thm:Korselt}) 
and its generalization in Theorem \ref{thm:Ded-Korselt}.

\begin{theorem}[The structure theorem of Carmichael ring]\label{thm:Car-structure} 
Let $R$ be a finite ring with identity.  
Then,  $R$ is a Carmichael ring if and only if 
$$
R  \cong \F_{q_1} \times \cdots \times \F_{q_k}
$$ 
for some integer $k \ge 2$, and for each $1 \le i \le k$, $\F_{q_i}$ is a finite field of $q_i$ elements and  $q_{i}-1 \mid |R|-1$.  
\end{theorem}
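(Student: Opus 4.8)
The plan is to prove both directions by reducing a finite ring to its local components and analyzing when the "power map" $a \mapsto a^{|R|}$ is the identity.

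\textbf{The forward direction.} Suppose $R$ is a Carmichael ring. Since $a^{|R|}=a$ for all $a$, in particular $R$ has no nonzero nilpotent element (if $a^m=0$ then $a = a^{|R|^m}=0$ once $|R|^m \ge m$), so $R$ is reduced. By Jacobson's theorem cited in the excerpt, $R$ is commutative; a finite commutative reduced ring with identity is a finite product of finite fields, say $R \cong \F_{q_1}\times\cdots\times\F_{q_k}$ (decompose via the minimal primes / the Artinian structure theorem, noting each local factor is a field because it is reduced Artinian local). Now $|R| = q_1\cdots q_k$. Applying the Carmichael condition coordinatewise, for each $i$ and each $a \in \F_{q_i}^{\times}$ we need $a^{|R|}=a$, i.e. $a^{|R|-1}=1$ for every element of the cyclic group $\F_{q_i}^{\times}$ of order $q_i-1$; this holds for all such $a$ iff $q_i - 1 \mid |R|-1$. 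Finally $k\ge 2$: if $k=1$ then $R$ is a field, which is excluded by Definition~\ref{def:Car-ring}. (One should also remark $R\neq 0$, handled since $R$ has identity and $0$ is a field-free trivial case, or simply note $|R|\ge 2$.)

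\textbf{The converse direction.} Suppose $R \cong \F_{q_1}\times\cdots\times\F_{q_k}$ with $k\ge2$ and $q_i-1\mid |R|-1$ for each $i$. Then $R$ is not a field since $k\ge2$ (the ring has nontrivial idempotents, equivalently zero divisors). For $a=(a_1,\dots,a_k)\in R$, we have $a^{|R|}=(a_1^{|R|},\dots,a_k^{|R|})$, so it suffices to check $a_i^{|R|}=a_i$ in each $\F_{q_i}$. If $a_i=0$ this is clear; if $a_i\neq 0$ then $a_i^{q_i-1}=1$, and since $q_i-1\mid |R|-1$ we get $a_i^{|R|-1}=1$, hence $a_i^{|R|}=a_i$. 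Thus $R$ is a Carmichael ring.

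\textbf{Main obstacle.} The only nontrivial input is the structural fact that a finite reduced commutative ring is a product of finite fields; everything else is an elementary manipulation with the cyclic group $\F_{q_i}^\times$. I would either invoke the Artin--Wedderburn / structure theorem for Artinian rings directly, or give the short self-contained argument: a finite commutative ring is a product of local rings, and a reduced local Artinian ring has maximal ideal $\mathfrak{m}$ with $\mathfrak{m}=\mathfrak{m}^n=0$ for large $n$ (nilpotent), hence $\mathfrak{m}=0$ and the ring is a field. One caveat worth stating explicitly in the writeup is that commutativity is not assumed a priori in the theorem, so the appeal to Jacobson's theorem must come before the structure-theory step; alternatively, one can observe that the Carmichael identity $a^{|R|}=a$ is exactly the hypothesis $x^n=x$ of Jacobson's theorem with $n=|R|$, making the citation immediate.
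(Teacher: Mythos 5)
Your proposal is correct and follows essentially the same route as the paper: both directions reduce to writing $R$ as a product of finite fields and then reading off the divisibility condition $q_i-1\mid |R|-1$ from the cyclic groups $\F_{q_i}^{\times}$. The only cosmetic difference is in how the decomposition is obtained — you establish reducedness from $a^{|R|}=a$ and invoke the structure theory of finite commutative (Artinian) rings, whereas the paper directly proves that the natural map $R\to\prod_{\mathfrak{M}}R/\mathfrak{M}$ is injective by showing $1-a^{|R|-1}$ is a unit for $a$ in the Jacobson radical; both arguments are valid and of comparable length.
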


\begin{proof} 
We only need to prove the necessity. Assume that $R$ is a Carmichael ring. 
Then, automatically $R$ is a commutative ring. 

Consider the natural homomorphism 
$$
\sigma:  R \to \prod_{\mathfrak{M}} R/\mathfrak{M}, \quad a \mapsto (a, \ldots, a), 
$$
where $\mathfrak{M}$ runs through all the maximal ideals of $R$. 

For $a \in R$, if $\sigma(a)=0$, then $a \in \mathfrak{M}$ for each maximal ideal $\mathfrak{M}$ of $R$. 
Besides, since $a^{|R|} = a$ by definition, we have $(1-a^{|R|-1})a=0$. 
If $1-a^{|R| -1}$ is not a unit, then there eixsts a maximal ideal, say $\mathfrak{M}_0$, such that $1-a^{|R| -1} \in \mathfrak{M}_0$, 
and so $1 \in \mathfrak{M}_0$ (because $a \in \mathfrak{M}_0$), which leads to a contradiction. 
So, we must have that $1-a^{|R|-1}$ is a unit, and thus $a=0$. 
Hence, $\sigma$ is injective. 
Noticing that $R$ has only finitely many maximal ideals and using the Chinese remainder theorem, 
we know that $\sigma$ is also surjective, and thus it is an isomorphism. 
Since $R$ is not a field, it must have more than one maximal ideals. 

Moreover, each $R/\mathfrak{M}$ is in fact a finite field. 
Due to $a^{|R|}=a$ for any $a \in R/\mathfrak{M}$, 
we see that $|R/\mathfrak{M}|-1$ divides $|R|-1$. 
This completes the proof. 
\end{proof}

The following corollary suggests that there exist finite rings $R$ 
such that any non-trivial ideal of $R$ is not a Carmichael ideal. 

\begin{corollary}
Let $\F_{q_1},\F_{q_2}, \F_{q_3}$ be three distinct finite fields, 
and let $R = \F_{q_1} \times \F_{q_2} \times \F_{q_3}$.  
Then, any non-trivial ideal of $R$ is not a Carmichael ideal. 
\end{corollary}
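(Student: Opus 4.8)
The plan is to enumerate all ideals of $R$ and test each corresponding quotient against the structure theorem (Theorem~\ref{thm:Car-structure}). Since each $\F_{q_i}$ is a field, hence a simple ring, every ideal of $R = \F_{q_1} \times \F_{q_2} \times \F_{q_3}$ has the form $J_1 \times J_2 \times J_3$ with each $J_i \in \{0, \F_{q_i}\}$; thus $R$ has exactly eight ideals, and the six non-trivial ones are obtained by choosing a proper, nonempty subset $S \subsetneq \{1,2,3\}$ of coordinates to keep and zeroing out the rest. It then suffices to show that for each of these six ideals $I$, the quotient $R/I$ fails to be a Carmichael ring.

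First I would dispose of the ideals that kill two coordinates, say $I = \F_{q_1} \times \F_{q_2} \times 0$. Here $R/I \cong \F_{q_3}$ is a field, so by Definition~\ref{def:Car-ring} it is not a Carmichael ring; the same reasoning handles the three ideals of this shape. The remaining case is an ideal killing exactly one coordinate, say $I = \F_{q_1} \times 0 \times 0$, so that $R/I \cong \F_{q_2} \times \F_{q_3}$, a ring of cardinality $q_2 q_3$. By Theorem~\ref{thm:Car-structure}, $R/I$ is a Carmichael ring if and only if $q_2 - 1 \mid q_2 q_3 - 1$ and $q_3 - 1 \mid q_2 q_3 - 1$.

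The key computation is the congruence $q_2 q_3 - 1 = q_2(q_3 - 1) + (q_2 - 1)$, from which $q_3 - 1 \mid q_2 q_3 - 1$ is equivalent to $q_3 - 1 \mid q_2 - 1$, and symmetrically $q_2 - 1 \mid q_2 q_3 - 1$ is equivalent to $q_2 - 1 \mid q_3 - 1$. Both divisibilities holding simultaneously forces $q_2 - 1 = q_3 - 1$, i.e.\ $q_2 = q_3$, which contradicts the hypothesis that the three fields are distinct (equivalently, that their cardinalities are pairwise distinct, since a finite field is determined up to isomorphism by its cardinality). Hence $R/I$ is not a Carmichael ring, and running over the three ideals of this shape completes the argument.

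The whole argument is routine once the ideal structure of a product of fields is recorded; the only mild obstacle is the divisibility manipulation in the last case, which I would streamline using $q_i q_j - 1 \equiv q_j - 1 \pmod{q_i - 1}$.
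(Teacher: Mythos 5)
Your proof is correct and follows essentially the same route as the paper: reduce to the six non-trivial ideals, dismiss the quotients that are fields, and apply Theorem~\ref{thm:Car-structure} to the quotients of the form $\F_{q_i}\times\F_{q_j}$, where the two divisibility conditions force $q_i=q_j$. You merely spell out the divisibility step (via $q_iq_j-1\equiv q_j-1 \pmod{q_i-1}$) that the paper leaves implicit.
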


\begin{proof}
Note that a field has only trivial ideals, and a Carmichael ring is not a field. 
We only need to consider the ideals of $R$ isomorphic to $\F_{q_1},\F_{q_2}, \F_{q_3}$. 
So, it suffices to show that the following rings are not Carmichael rings: 
$$
\F_{q_1} \times \F_{q_2}, \quad \F_{q_1} \times \F_{q_3}, \quad \F_{q_2} \times \F_{q_3}. 
$$
For example, consider the ring $\F_{q_1} \times \F_{q_2}$, if it is a Carmichael ring, 
then by Theorem~\ref{thm:Car-structure} we have 
$$
q_1 -1 \mid q_1q_2 - 1, \qquad  q_2 -1 \mid q_1q_2 - 1, 
$$
which implies $q_1=q_2$. 
This contradicts with the assumption that $\F_{q_1}$ and $\F_{q_2}$ are two distinct finite fields. 
\end{proof}

\section{Carmichael ideals in Dedekind domains}

In this section, we consider Carmichael ideals in Dedekind domains by generalizing some results in \cite{Steele}. 

Suppose that $\mathcal{O}_{K}$ is a Dedekind domains, and $K$ is the
fraction field of $\mathcal{O}_{K}$. For any ideal $\mathfrak{n}$ of
$\mathcal{O}_{K}$, denote
$$
N_{K}(\mathfrak{n})=|\mathcal{O}_{K}/\mathfrak{n}|.
$$ 
From Definition~\ref{def:Car-ideal}, an ideal $\mathfrak{n}$ of $\cO_K$  is a Carmichael ideal 
if and only if $\mathfrak{n}$ is a composite ideal, $N_{K}(\mathfrak{n})$ is finite, and for all
$\alpha$ in $\mathcal{O}_{K}$, we have
$\alpha^{N_{K}(\mathfrak{n})}\equiv\alpha ~(\textrm{mod}~ \mathfrak{n})$. 

Using Theorem~\ref{thm:Car-structure}, it is easy to get a necessary
and sufficient condition for an ideal to be a Carmichael ideal in
$\mathcal{O}_{K}$, generalizing Theorem~\ref{thm:Korselt} and also 
 Korselt's criterion in number field case (see \cite[Theorem 2.2]{Steele}).

\begin{theorem}[Korselt's criterion for Dedekind domains]  \label{thm:Ded-Korselt} 
A composite ideal $\mathfrak{n}$ is a Carmichael ideal of $\mathcal{O}_{K}$ if and only if 
\begin{itemize}
\item[(1)] $\mathfrak{n}$ is square-free,
\item[(2)] $N_{K}(\mathfrak{n})$ is finite,
\item[(3)] $N_{K}(\mathfrak{p})-1$ divides $N_{K}(\mathfrak{n})-1$ for any prime ideal $\mathfrak{p} \mid \mathfrak{n}$.
\end{itemize}
\end{theorem}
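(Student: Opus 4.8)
The plan is to reduce the statement to the structure theorem for Carmichael rings (Theorem~\ref{thm:Car-structure}) applied to the finite ring $\cO_K/\mathfrak{n}$. First I would note that $\mathfrak{n}$ being a Carmichael ideal means, by Definition~\ref{def:Car-ideal}, that $\cO_K/\mathfrak{n}$ is a Carmichael ring; in particular $N_K(\mathfrak{n})$ is finite and $\mathfrak{n}$ is composite (i.e.\ $\cO_K/\mathfrak{n}$ is not a field), so conditions (2) and the ``composite'' hypothesis are built in. By Theorem~\ref{thm:Car-structure}, $\mathfrak{n}$ is Carmichael if and only if $N_K(\mathfrak{n})$ is finite and
$$
\cO_K/\mathfrak{n} \cong \F_{q_1} \times \cdots \times \F_{q_k}
$$
for some $k \ge 2$ with $q_i - 1 \mid N_K(\mathfrak{n}) - 1$ for each $i$. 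So the whole task is to translate this ring-theoretic decomposition into the divisibility statements (1) and (3) about the ideal $\mathfrak{n}$ and its prime factors.

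The key step is the factorization of $\mathfrak{n}$ in the Dedekind domain $\cO_K$: since $N_K(\mathfrak{n})$ is finite, $\mathfrak{n} \ne (0)$, so we may write $\mathfrak{n} = \mathfrak{p}_1^{e_1} \cdots \mathfrak{p}_r^{e_r}$ with distinct nonzero prime ideals $\mathfrak{p}_i$ and $e_i \ge 1$. The $\mathfrak{p}_i^{e_i}$ are pairwise comaximal, so the Chinese Remainder Theorem gives $\cO_K/\mathfrak{n} \cong \prod_i \cO_K/\mathfrak{p}_i^{e_i}$. Now I would compare this with the decomposition coming from the structure theorem. A finite commutative ring decomposes uniquely as a product of local rings, and $\cO_K/\mathfrak{p}_i^{e_i}$ is local with maximal ideal $\mathfrak{p}_i/\mathfrak{p}_i^{e_i}$; it is a field if and only if $e_i = 1$. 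Since the structure theorem forces every local factor of $\cO_K/\mathfrak{n}$ to be a field, uniqueness of the local decomposition forces $e_i = 1$ for all $i$, i.e.\ $\mathfrak{n}$ is square-free — that is condition (1). Conversely, if $\mathfrak{n}$ is square-free then $\cO_K/\mathfrak{n} \cong \prod_i \cO_K/\mathfrak{p}_i$ is automatically a product of finite fields, with $k = r \ge 2$ because $\mathfrak{n}$ is composite.

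Having established square-freeness, the factors $\F_{q_i}$ in the structure theorem are exactly the residue fields $\cO_K/\mathfrak{p}_i$, so $q_i = N_K(\mathfrak{p}_i)$ and the multiplicativity of the norm gives $N_K(\mathfrak{n}) = \prod_i N_K(\mathfrak{p}_i)$. The condition $q_i - 1 \mid N_K(\mathfrak{n}) - 1$ from Theorem~\ref{thm:Car-structure} then reads precisely $N_K(\mathfrak{p}_i) - 1 \mid N_K(\mathfrak{n}) - 1$ for every $\mathfrak{p}_i \mid \mathfrak{n}$, which is condition (3). Running this equivalence in both directions (using, for the converse, that when $\mathfrak{n}$ is square-free and (3) holds, the ring $\prod_i \cO_K/\mathfrak{p}_i$ satisfies the hypotheses of the structure theorem and hence is a Carmichael ring) completes the proof.

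I do not expect a genuine obstacle here; the only point requiring a little care is the bookkeeping around the CRT decomposition and the uniqueness of the local-ring decomposition of a finite commutative ring — specifically, making sure that ``every factor in the structure theorem is a field'' genuinely forces each $\cO_K/\mathfrak{p}_i^{e_i}$ to be a field rather than merely forcing the product to be reduced. Invoking that a finite commutative ring is reduced if and only if it is a product of fields (equivalently, has trivial nilradical) handles this cleanly, since $\mathfrak{n}$ square-free is equivalent to $\cO_K/\mathfrak{n}$ being reduced.
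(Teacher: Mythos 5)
Your proposal is correct and follows the same route as the paper: factor $\mathfrak{n}$ into prime powers, apply the Chinese Remainder Theorem to decompose $\cO_K/\mathfrak{n}$, and invoke Theorem~\ref{thm:Car-structure}. The paper's proof is terser (it ends with ``From Theorem~\ref{thm:Car-structure}, we get what we want''), and your extra care about why square-freeness is forced --- via uniqueness of the local decomposition, or equivalently reducedness of $\cO_K/\mathfrak{n}$ --- is exactly the bookkeeping the paper leaves implicit.
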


\begin{proof}
Suppose that $\mathfrak{n}$ has the prime factorization: 
$$
\mathfrak{n}=\mathfrak{p}_{1}^{e_{1}}\mathfrak{p}_{2}^{e_{2}}\cdots\mathfrak{p}_{s}^{e_{s}},
$$
where each $\mathfrak{p}_{i}$, $1\leq i\leq s$, is a prime ideal of $\mathcal{O}_{K}$. 
By the Chinese Reminder Theorem, we have
$$
\mathcal{O}_{K}/\mathfrak{n}=\mathcal{O}_{K}/\mathfrak{p}_{1}^{e_{1}} \times 
\mathcal{O}_{K}/\mathfrak{p}_{2}^{e_{2}} \times 
\cdots \times \mathcal{O}_{K}/\mathfrak{p}_{s}^{e_{s}}.
$$ From
Theorem~\ref{thm:Car-structure}, we get what we want.
\end{proof}

We now consider Carmichael ideals in the extensions of Dedekind domains. 
By Theorem~\ref{thm:Ded-Korselt} we only need to consider square-free ideals. 

\begin{theorem}~\label{thm:Ded-ideal}
Suppose that $L$ is a finite separable extension over $K$ of degree $d$,
$\mathfrak{n}$ is a square-free ideal of $\cO_K$, and $N_{K}(\mathfrak{n})$ is finite. 
Let $\mathcal{O}_{L}$ be the integral closure of $\mathcal{O}_{K}$ in $L$. 
Then, $\mathfrak{n}\cO_L$ is Carmichael in $\mathcal{O}_{L}$ if and only if 
\begin{itemize}
\item[(1)] $\mathfrak{n}\cO_L$ is a composite ideal,
\item[(2)] $\mathfrak{n}$ is relatively prime to the discriminant ${\rm Disc}(L/K)$,
\item[(3)] for each prime ideal $\mathfrak{p}\mid \mathfrak{n}$ and any prime ideal $\mathfrak{P}$ of $\cO_L$  
lying above $\mathfrak{p}$, we have
$N_{K}(\mathfrak{p})^{f(\mathfrak{P})}-1\mid
N_{K}(\mathfrak{n})^{d}-1$, where $f(\mathfrak{P})$ is the residue
class degree of $\mathfrak{P}$ in $L/K$.
\end{itemize}
\end{theorem}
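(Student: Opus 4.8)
The plan is to apply the Korselt criterion for Dedekind domains (Theorem~\ref{thm:Ded-Korselt}) directly to the ideal $\mathfrak{n}\cO_L$ of $\cO_L$, and then to translate each of its conditions into a statement about $\mathfrak{n}$ inside $\cO_K$. To set this up I would first recall the relevant ramification theory. Since $L/K$ is finite separable, the trace pairing $(x,y)\mapsto \mathrm{Tr}_{L/K}(xy)$ is nondegenerate, so $\cO_L$ is contained in a free $\cO_K$-module of rank $d$; as $\cO_K$ is Noetherian, $\cO_L$ is a finitely generated $\cO_K$-module, and it is again a Dedekind domain. Hence the relative discriminant ${\rm Disc}(L/K)$ is a well-defined nonzero ideal of $\cO_K$, and a prime $\mathfrak{p}$ of $\cO_K$ is ramified in $L/K$ if and only if $\mathfrak{p}\mid {\rm Disc}(L/K)$. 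I will also use the fundamental identity $\sum_{\mathfrak{P}\mid\mathfrak{p}}e(\mathfrak{P})f(\mathfrak{P})=d$, valid in this setting.

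Now write $\mathfrak{n}=\mathfrak{p}_{1}\cdots\mathfrak{p}_{s}$ for the square-free factorization of $\mathfrak{n}$, so that $\mathfrak{n}\cO_L=\prod_{i}\mathfrak{p}_{i}\cO_L$ is a product of pairwise coprime ideals. Writing $N_{L}(\mathfrak{a})=|\cO_L/\mathfrak{a}|$, the Chinese Remainder Theorem and the fundamental identity give $|\cO_L/\mathfrak{p}_{i}\cO_L|=N_{K}(\mathfrak{p}_{i})^{d}$ for each $i$, hence
$$
N_{L}(\mathfrak{n}\cO_L)=\prod_{i}N_{K}(\mathfrak{p}_{i})^{d}=N_{K}(\mathfrak{n})^{d},
$$
which is finite; this settles condition~(2) of Theorem~\ref{thm:Ded-Korselt}. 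Next, again because the $\mathfrak{p}_{i}\cO_L$ are pairwise coprime, $\mathfrak{n}\cO_L$ is square-free if and only if each $\mathfrak{p}_{i}\cO_L$ is square-free, i.e. each $\mathfrak{p}_{i}$ is unramified in $L/K$, i.e. $\mathfrak{p}_{i}\nmid {\rm Disc}(L/K)$; so condition~(1) of Theorem~\ref{thm:Ded-Korselt} for $\mathfrak{n}\cO_L$ is precisely condition~(2) of the present theorem. Finally, the prime ideals of $\cO_L$ dividing $\mathfrak{n}\cO_L$ are exactly the primes $\mathfrak{P}$ lying above some $\mathfrak{p}\mid\mathfrak{n}$, and for such $\mathfrak{P}$ one has $N_{L}(\mathfrak{P})=N_{K}(\mathfrak{p})^{f(\mathfrak{P})}$; substituting this together with $N_{L}(\mathfrak{n}\cO_L)=N_{K}(\mathfrak{n})^{d}$ into condition~(3) of Theorem~\ref{thm:Ded-Korselt} turns it into condition~(3) here. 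The hypothesis in Theorem~\ref{thm:Ded-Korselt} that the ideal be composite is recorded as condition~(1) of the present theorem; and conversely, if $\mathfrak{n}\cO_L$ is Carmichael, then $\cO_L/\mathfrak{n}\cO_L$ is a finite ring that is not a field, so $\mathfrak{n}\cO_L$ is automatically proper, nonzero and non-prime, i.e. composite. Reading these equivalences in both directions yields the theorem.

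The mathematical content is light once this dictionary is in place; the point that needs a little care is that we work over an arbitrary Dedekind base rather than over $\Z$, so $\cO_L$ need not be a \emph{free} $\cO_K$-module. The trace-form argument above shows it is at least a finitely generated $\cO_K$-module, which is all that is required for the discriminant to be defined, for the identity $\sum_{\mathfrak{P}\mid\mathfrak{p}}e(\mathfrak{P})f(\mathfrak{P})=d$, and for the equivalence ``$\mathfrak{p}$ unramified in $L/K$ $\iff$ $\mathfrak{p}\nmid {\rm Disc}(L/K)$'' to remain valid; I would invoke the standard references for these facts and then perform the translation exactly as above.
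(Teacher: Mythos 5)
Your proposal is correct and follows essentially the same route as the paper: apply the Korselt criterion for Dedekind domains (Theorem~\ref{thm:Ded-Korselt}) to $\mathfrak{n}\cO_L$ and translate each condition via $N_{L}(\mathfrak{P})=N_{K}(\mathfrak{p})^{f(\mathfrak{P})}$, $N_{L}(\mathfrak{n}\cO_{L})=N_{K}(\mathfrak{n})^{d}$, and the equivalence between square-freeness of $\mathfrak{n}\cO_L$ and $\mathfrak{n}$ being coprime to ${\rm Disc}(L/K)$. You simply make explicit the ramification-theoretic facts and the sufficiency direction that the paper leaves as routine.
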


\begin{proof}
We first prove the necessity by using some basic properties of Dedekind domains.  
Since $\mathfrak{n}\cO_L$ is Carmichael in $\mathcal{O}_{L}$, 
by Theorem~\ref{thm:Ded-Korselt} we have that $\mathfrak{n}\cO_L$ is a composite and square-free ideal. 
That is,  all the prime factors of $\mathfrak{n}$ are  unramified in $L/K$, 
which means that $\mathfrak{n}$ is relatively prime to the discriminant ${\rm Disc}(L/K)$. 
Besides, for each prime ideal $\mathfrak{p}\mid \mathfrak{n}$ and any prime ideal $\mathfrak{P}$ of $\cO_L$  
lying above $\mathfrak{p}$, by Theorem~\ref{thm:Ded-Korselt} we have that 
$N_{L}(\mathfrak{P})-1$ divides $N_{L}(\mathfrak{n}\cO_{L})-1$. 
We complete the proof of this part by noticing 
$N_{L}(\mathfrak{P})=N_{K}(\mathfrak{p})^{f(\mathfrak{P})}$ and 
 $N_{L}(\mathfrak{n}\cO_{L})= N_{K}(\mathfrak{n})^{d}$. 

Conversely, one can prove the sufficiency directly by using Theorem~\ref{thm:Ded-Korselt}.
\end{proof}

As in \cite[Theorem 2.3]{Steele}, the following is a generalization of
Fermat's Little Theorem to the case of Dedekind domains.

\begin{corollary}  \label{cor:Ded-prime}
Let $L$ be a finite Galois extension of $K$ . 
Suppose that $\mathfrak{p}$ is a non-zero prime ideal of $\mathcal{O}_{K}$, $N_{K}(\mathfrak{p})$ is finite,  
and $\mathfrak{p}$ does not divide the discriminant ${\rm Disc}(L/K)$. 
Then, we have 
$$
\alpha^{N_{L}(\mathfrak{p}\cO_{L})}\equiv\alpha ~({\rm mod}~\mathfrak{p}\cO_L)
$$
for all $\alpha\in \mathcal{O}_{L}$. 
That is,  the ideal $\mathfrak{p}\mathcal{O}_{L}$ is either prime or
Carmichael.
\end{corollary}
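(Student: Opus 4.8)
The plan is to deduce this from Theorem~\ref{thm:Ded-ideal}, taking there $L/K$ to be the given Galois extension of some degree $d$ and $\mathfrak{n}=\mathfrak{p}$, a single prime ideal coprime to ${\rm Disc}(L/K)$. Since $\mathfrak{p}$ is prime it is certainly square-free with $N_K(\mathfrak{p})$ finite, so the hypotheses of Theorem~\ref{thm:Ded-ideal} are met. The dichotomy in the conclusion then corresponds exactly to whether condition~(1) of that theorem holds: if $\mathfrak{p}\cO_L$ is not a composite ideal, it is prime, and then the congruence $\alpha^{N_L(\mathfrak{p}\cO_L)}\equiv\alpha\pmod{\mathfrak{p}\cO_L}$ is just Fermat's little theorem in the finite field $\cO_L/\mathfrak{p}\cO_L$; if $\mathfrak{p}\cO_L$ is composite, we must verify conditions~(2) and~(3) so that Theorem~\ref{thm:Ded-ideal} gives that $\mathfrak{p}\cO_L$ is Carmichael, whence $\alpha^{N_L(\mathfrak{p}\cO_L)}\equiv\alpha\pmod{\mathfrak{p}\cO_L}$ for all $\alpha\in\cO_L$ by the definition of a Carmichael ideal.

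Condition~(2) is immediate from the hypothesis that $\mathfrak{p}\nmid{\rm Disc}(L/K)$. The substance of the argument is condition~(3): I need $N_K(\mathfrak{p})^{f(\mathfrak{P})}-1 \mid N_K(\mathfrak{p})^d-1$ for every prime $\mathfrak{P}$ of $\cO_L$ above $\mathfrak{p}$. Here is where the Galois hypothesis does the work. Because $L/K$ is Galois, the Galois group acts transitively on the primes above $\mathfrak{p}$, so all of them share a common residue degree $f=f(\mathfrak{P})$ and a common ramification index $e$; since $\mathfrak{p}$ is unramified, $e=1$, and the fundamental identity $\sum e_i f_i = d$ collapses to $gf = d$, where $g$ is the number of primes above $\mathfrak{p}$. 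In particular $f \mid d$, and therefore, writing $N=N_K(\mathfrak{p})$, we have $N^f - 1 \mid N^d - 1$ by the standard divisibility $x^a-1\mid x^b-1$ whenever $a\mid b$ (with $x=N$, $a=f$, $b=d$). This is exactly condition~(3).

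The main obstacle — really the only place one must be careful — is the case analysis forced by the fact that Theorem~\ref{thm:Ded-ideal} only speaks to \emph{composite} ideals, whereas here $\mathfrak{p}\cO_L$ may well stay prime (e.g.\ if $\mathfrak{p}$ is inert in $L$). So I should handle the prime case separately and explicitly: when $\mathfrak{p}\cO_L$ is prime, $\cO_L/\mathfrak{p}\cO_L$ is a finite field of order $N_L(\mathfrak{p}\cO_L)$, and $\alpha^{N_L(\mathfrak{p}\cO_L)}=\alpha$ there for every element, which unwinds to the asserted congruence. Combining the two cases yields the statement, and the final sentence of the corollary (``either prime or Carmichael'') is then just a restatement of which branch of the dichotomy occurred. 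I would also note, for completeness, that the hypothesis $N_K(\mathfrak{p})$ finite guarantees $N_L(\mathfrak{p}\cO_L)=N_K(\mathfrak{p})^d$ is finite, so all the norms in sight make sense as cardinalities of finite rings.
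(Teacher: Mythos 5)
Your proposal is correct and follows essentially the same route as the paper: both reduce to Theorem~\ref{thm:Ded-ideal} via the observation that the Galois hypothesis forces $f(\mathfrak{P})\mid d$, hence $N_K(\mathfrak{p})^{f(\mathfrak{P})}-1\mid N_K(\mathfrak{p})^d-1$. The paper compresses the remaining details (including the inert case) into ``the rest follows from Theorem~\ref{thm:Ded-ideal} and definition,'' whereas you spell out the prime/composite dichotomy explicitly; that is a welcome clarification, not a different argument.
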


\begin{proof}
Since $L$ is a finite Galois extension of $K$, for any prime ideal $\mathfrak{P}$ of $\cO_L$ 
we have $f(\mathfrak{P}) \mid d$, where $d= [L:K]$. 
So, automatically we have $N_{K}(\mathfrak{p})^{f(\mathfrak{P})}-1\mid
N_{K}(\mathfrak{p})^{d}-1$ for any prime ideal $\mathfrak{p}$ of $\cO_K$ lying below $\mathfrak{P}$.  
The rest follows from Theorem~\ref{thm:Ded-ideal} and definition. 
\end{proof}

\section{Carmichael polynomials over finite fields}
\label{sec:Car-pol}

In this section, we study Carmichael polynomials in $\F_q[t]$. 

A Korselt-type criterion for Carmichael polynomials follows directly from Theorem~\ref{thm:Ded-Korselt}. 

\begin{theorem}[Korselt's criterion for polynomials]  \label{thm:pol-Korselt} 
A composite polynomial $g \in \F_q[t]$ is a Carmichael polynomial  if and only if 
\begin{itemize}
\item[(1)] $g$ is square-free,
\item[(2)] for any irreducible factor $P$ of $g$, $\deg P \mid \deg g$. 
\end{itemize}
\end{theorem}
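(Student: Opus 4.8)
The plan is to deduce this from the Korselt criterion for Dedekind domains (Theorem~\ref{thm:Ded-Korselt}) applied to the ring $\cO_K = \F_q[t]$, whose fraction field is $K = \F_q(t)$. Since $\F_q[t]$ is a principal ideal domain, an ideal is the principal ideal $(g)$ for a polynomial $g$, and the ideal is composite exactly when $g$ is a composite (non-constant, non-irreducible, non-unit) polynomial. The first step is to translate the three conditions of Theorem~\ref{thm:Ded-Korselt} into statements about $g$. Conditions (1) of both theorems match immediately: $(g)$ is square-free as an ideal if and only if $g$ is square-free as a polynomial. Condition (2) of Theorem~\ref{thm:Ded-Korselt}, namely that $N_K(\mathfrak{n})$ is finite, is automatic here: for any nonzero polynomial $g$ of degree $n$, the quotient $\F_q[t]/(g)$ is an $\F_q$-vector space of dimension $n$, hence $N_K((g)) = q^{\deg g}$ is finite.

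The heart of the matter is showing that condition (3) of Theorem~\ref{thm:Ded-Korselt} is equivalent to condition (2) of the present theorem. For an irreducible factor $P$ of $g$, the prime ideal is $\mathfrak{p} = (P)$, and $N_K((P)) = q^{\deg P}$ since $\F_q[t]/(P)$ is the finite field with $q^{\deg P}$ elements. With $N_K((g)) = q^{\deg g}$, condition (3) reads
$$
q^{\deg P} - 1 \mid q^{\deg g} - 1 \quad \text{for every irreducible } P \mid g.
$$
So I need the elementary number-theoretic fact that for an integer $q \ge 2$ and positive integers $a, b$, one has $q^a - 1 \mid q^b - 1$ if and only if $a \mid b$. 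I would cite or briefly recall this: writing $b = ak + r$ with $0 \le r < a$, one gets $q^b - 1 \equiv q^r - 1 \pmod{q^a - 1}$, and $0 \le q^r - 1 < q^a - 1$ forces $q^r - 1 = 0$, i.e. $r = 0$; the converse is the standard geometric-series factorization. Applying this with $a = \deg P$ and $b = \deg g$ shows that condition (3) holds if and only if $\deg P \mid \deg g$ for every irreducible factor $P$ of $g$, which is exactly condition (2).

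Assembling these equivalences completes the proof: $g$ composite is Carmichael $\iff$ $(g)$ is a Carmichael ideal of $\F_q[t]$ $\iff$ (by Theorem~\ref{thm:Ded-Korselt}) $(g)$ is square-free, $N_K((g))$ finite, and $N_K((P)) - 1 \mid N_K((g)) - 1$ for all $P \mid g$ $\iff$ $g$ is square-free and $\deg P \mid \deg g$ for all irreducible $P \mid g$. I do not anticipate a genuine obstacle here; the only point requiring a line of justification is the divisibility lemma $q^a - 1 \mid q^b - 1 \Leftrightarrow a \mid b$, and the bookkeeping that $\F_q[t]$ is a Dedekind domain (indeed a PID) so that Theorem~\ref{thm:Ded-Korselt} applies and "composite ideal" corresponds to "composite polynomial."
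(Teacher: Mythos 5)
Your proposal is correct and follows essentially the same route as the paper: both deduce the statement from Theorem~\ref{thm:Ded-Korselt} applied to the PID $\F_q[t]$ and reduce condition (3) to the equivalence $q^{\deg P}-1 \mid q^{\deg g}-1 \Leftrightarrow \deg P \mid \deg g$. Your write-up merely spells out the finiteness of the norm and the proof of the divisibility lemma, which the paper leaves implicit.
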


\begin{proof}
We only need to mention the second condition. 
When $g$ is a Carmichael polynomial, then by Theorem~\ref{thm:Ded-Korselt}, 
for any irreducible factor $P$ of $g$ we have that $q^{\deg P} -1$ divides $q^{\deg g} -1$, 
which is equivalent to $\deg P \mid \deg g$.   
\end{proof}

From Theorem~\ref{thm:pol-Korselt}, we know that any polynomial of prime degree greater than $q$ is not a Carmichael polynomial. 
It is also easy to see that there are infinitely many Carmichael polynomials in $\F_q[t]$. 
Besides, for any integer $m \ge 2$, there are infinitely many Carmichael polynomials having exactly $m$ irreducible monic factors; 
for example, one can choose polynomials having exactly $m$ irreducible monic factors of the same degree.

In fact,  we can construct Carmichael polynomials starting from any square-free polynomial. 
However, the analogue is not true for Carmichael numbers (because all Carmichael numbers are odd). 

\begin{theorem}  \label{thm:const}
Let $u \in \F_q[t]$ be a square-free polynomial. 
Let $g, h \in \F_q[t]$ satisfy $g \ne 0$ and $\gcd(g,h)=1$. 
Then, there are infinitely many square-free monic polynomials $w$ whose irreducible monic factors are all congruent to $h$ modulo $g$ 
 such that $uw$ are Carmichael polynomials. 
\end{theorem}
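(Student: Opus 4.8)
The plan is to reduce the statement to the Korselt criterion for polynomials (Theorem~\ref{thm:pol-Korselt}) and then invoke a Dirichlet-type result on primes in arithmetic progressions in $\F_q[t]$. Write $u = P_1 \cdots P_r$ as a product of distinct monic irreducible polynomials (after normalizing the leading coefficient, which does not affect whether $uw$ is Carmichael since units change nothing), and set $D = \sum_{i=1}^r \deg P_i = \deg u$. We want to choose a square-free monic $w$, coprime to $u$, whose irreducible factors all have degree dividing $\deg(uw) = D + \deg w$, and such that each $\deg P_i$ also divides $D + \deg w$; then Theorem~\ref{thm:pol-Korselt} gives that $uw$ is Carmichael (it is automatically composite once $\deg(uw)\ge 2$ and it has at least two irreducible factors). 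The simplest way to control the degrees of the factors of $w$ is to take $w$ to be a product of monic irreducibles all of the \emph{same} degree $\ell$; then the degree condition for $w$'s factors becomes $\ell \mid D + k\ell$, i.e. $\ell \mid D$, and the degree condition for $u$'s factors becomes $\deg P_i \mid D + k\ell$ for all $i$, where $k$ is the number of irreducible factors of $w$.

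So first I would fix $\ell$ to be a multiple of $\operatorname{lcm}(\deg P_1, \ldots, \deg P_r)$ that is large enough (and in particular $\ell \mid D$ will need handling — one can instead replace $D$ in the bookkeeping by choosing $\ell$ a multiple of $D \cdot \operatorname{lcm}(\deg P_i)$, or more cleanly: pick $\ell$ so that $\operatorname{lcm}(\deg P_1,\ldots,\deg P_r) \mid \ell$ and also arrange $D + k\ell \equiv 0 \pmod{\ell}$, which forces $\ell \mid D$; to avoid that constraint, take $\ell$ itself to be any common multiple of the $\deg P_i$ and note that we are free to also throw a fixed auxiliary square-free polynomial of the right degree into $u$ to adjust $D$ modulo $\ell$ — but the clean route is simply to choose $\ell$ divisible by $\operatorname{lcm}(\deg P_i, D)$, wait, that still doesn't make $\ell \mid D$). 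The cleanest fix: choose $\ell := \operatorname{lcm}(\deg P_1, \ldots, \deg P_r)$, and then choose $k \ge 1$ with $\ell \mid D + k\ell$ automatically failing unless $\ell \mid D$; instead set $N := D + k\ell$ and require $\deg P_i \mid N$ (true, since $\deg P_i \mid \ell$ and we will also need $\deg P_i \mid D$ — but that need not hold!).

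Here is the obstacle, and the real plan around it. The degree-$\deg P_i$ divisibility condition $\deg P_i \mid D + \deg w$ couples $u$ and $w$ in a way that is not automatically satisfiable just by making $w$'s factors equidegree. The resolution is to let $w$ have factors of a single degree $\ell$ chosen so that $\ell$ is a multiple of $m := \operatorname{lcm}(\deg P_1,\ldots,\deg P_r, D)$ — no: we want $m \mid D + \deg w$ with $\deg w = k\ell$. Choose $\ell = m$ and $k = m - 1$ times an integer, so $\deg w = (m-1)\ell \cdot (\text{something})$... Let me just state it as: pick any $\ell$ that is a positive multiple of $\operatorname{lcm}(\deg P_1, \ldots, \deg P_r)$, and then pick a positive integer $k$ such that $\ell \mid D + k\ell$ (equivalently $\ell \mid D$) — so choose $\ell \mid D$ as well, i.e. $\ell = \operatorname{lcm}(\deg P_i)$ only works if that lcm divides $D$. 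To guarantee this, enlarge: let $\ell$ be \emph{any} common multiple of the integers $\deg P_1, \ldots, \deg P_r$, and note $D + k\ell$ runs over an arithmetic progression with common difference $\ell$; we need the factors of $w$ (degree $\ell$ each) to satisfy $\ell \mid D + k\ell$, forcing $\ell \mid D$; and we need $\deg P_i \mid D + k\ell$, which since $\deg P_i \mid \ell$ reduces to $\deg P_i \mid D$. Both reduce to conditions on $D$ alone. Since $D = \sum \deg P_i$ need not be divisible by the individual $\deg P_i$, I would instead allow $w$ to have irreducible factors of several prescribed degrees, or — the slick choice — take all irreducible factors of $w$ of the \emph{same} degree $\ell$ where $\ell$ is chosen with $\operatorname{lcm}(\deg P_1,\dots,\deg P_r)\mid \ell$, together with the requirement $N:=D+\deg w\equiv 0\pmod{\ell}$ and $N\equiv 0 \pmod{\deg P_i}$; since $\deg P_i\mid \ell\mid N$ the second is free, and the first just says $\deg w \equiv -D\pmod\ell$, which for $\deg w = k\ell$ forces $\ell\mid D$. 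So finally: \emph{choose $\ell$ to be a positive multiple of both $D$ and $\operatorname{lcm}(\deg P_1,\dots,\deg P_r)$}; then $\ell\mid D$ holds, $\deg P_i\mid \ell$ holds, and for every $k\ge 1$ the polynomial $N = D+k\ell$ is divisible by $\ell$ and hence by each $\deg P_i$ and by $\ell$. Good — that is the correct bookkeeping, and the main obstacle is exactly pinning down this choice of $\ell$.

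With $\ell$ fixed as above, it remains to produce, for infinitely many $k$, a square-free monic $w$ that is a product of $k$ distinct monic irreducibles each of degree $\ell$, each congruent to $h \bmod g$, and with $\gcd(w,u)=1$. For this I would invoke the function-field analogue of Dirichlet's theorem (the prime polynomial theorem in arithmetic progressions): since $\gcd(g,h)=1$, there are infinitely many monic irreducible $P\equiv h\pmod g$; restricting to a fixed residue degree needs the more refined count, namely that the number of monic irreducibles of degree $n$ in a fixed coprime residue class $\bmod\, g$ is $\dfrac{1}{\phi(g)}\cdot\dfrac{q^{n}}{n} + O\!\left(\dfrac{q^{n/2}}{n}\right)$, which is positive and in fact tends to infinity along $n = \ell, 2\ell, 3\ell, \dots$ Hence for every sufficiently large multiple $n$ of... actually we need degree exactly $\ell$, but we are free to replace $\ell$ by any larger common multiple $\ell'$ of $D$ and $\operatorname{lcm}(\deg P_i)$ — choosing $\ell'$ large enough that there are at least (say) two irreducibles of degree $\ell'$ in the class $h \bmod g$ and not dividing $g u$ — and then take $k=1,2,\dots$ up to that count, and for distinct values of $\ell'$ (e.g. $\ell, 2\ell, 3\ell,\dots$, each a valid choice since each is still a common multiple of $D$ and the $\deg P_i$) we get infinitely many distinct $w$. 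Finiteness of exceptions (irreducibles dividing $gu$) is harmless since only finitely many irreducibles divide a fixed polynomial. Assembling: for each large $j$ choose $\ell_j = j\ell$, pick one monic irreducible $P^{(j)}\equiv h\pmod g$ of degree $\ell_j$ coprime to $gu$, set $w_j = P^{(j)}$; then $\deg(u w_j) = D + \ell_j$ is divisible by $\ell_j$ (as $\ell_j\mid D$) hence by $\deg P^{(j)} = \ell_j$, and divisible by each $\deg P_i$ (as $\deg P_i\mid D$ and $\deg P_i\mid \ell_j$), so by Theorem~\ref{thm:pol-Korselt} each $uw_j$ is a square-free Carmichael polynomial, the $w_j$ are pairwise distinct (distinct degrees), and their irreducible factors are all $\equiv h\pmod g$. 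This completes the argument; the one genuine input beyond Theorem~\ref{thm:pol-Korselt} is the effective prime polynomial theorem in arithmetic progressions, and the one bit of care is the choice of the modulus-like integer $\ell$ ensuring all the degree-divisibility constraints hold simultaneously.
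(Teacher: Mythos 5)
There is a genuine error in the final construction, and it is located exactly at the point you spent most of the proposal wrestling with. You choose $\ell$ to be a positive \emph{multiple} of $D=\deg u$ and of $\operatorname{lcm}(\deg P_1,\dots,\deg P_r)$, and then assert ``$\ell\mid D$ holds'' — but a positive multiple of $D$ divides $D$ only if it equals $D$. In your assembled argument you take $w_j$ to be a single irreducible of degree $\ell_j=j\ell$ and claim $\ell_j\mid D+\ell_j$ ``as $\ell_j\mid D$''; since $D\mid\ell\mid\ell_j$, this forces $\ell_j=D$, so the divisibility fails for every $j$ with $j\ell>D$. Concretely, take $u=P_1P_2$ with $\deg P_1=1$, $\deg P_2=2$, so $D=3$ and $\ell=6$: then $\deg(uw_1)=9$ is not divisible by $\deg w_1=6$, and $uw_1$ is not Carmichael. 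Moreover the strategy is not merely misstated but unsalvageable in this form: requiring all irreducible factors of $w$ to have one common degree $\ell$ forces $\ell\mid D$, hence $\ell\le D$, and since there are only finitely many monic irreducibles of each degree $\le D$, you could never produce infinitely many $w$ this way.

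The missing idea is to let $w$ have irreducible factors of \emph{two} different degrees, chosen so that the total degree factors nicely. The paper sets $m=\operatorname{lcm}\bigl(\deg u,\ \deg P_1,\dots,\deg P_r\bigr)$ and, for each large $d$, picks from the progression $h\bmod g$ exactly $dm-\deg u-1$ monic irreducibles of degree $dm$ together with one monic irreducible $Q$ of degree $dm-\deg u$; then
$$
\deg\bigl(uP_1\cdots P_kQ\bigr)=\deg u+(dm-\deg u-1)\,dm+(dm-\deg u)=dm\,(dm-\deg u),
$$
which is divisible by $dm$, by $dm-\deg u$, and by every $\deg P_i$ (each divides $m$), so Theorem~\ref{thm:pol-Korselt} applies and letting $d\to\infty$ gives infinitude. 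The auxiliary factor $Q$, whose degree is rigged to make the total degree a product of the two degrees in play, is precisely the device that breaks the circular constraint $\ell\mid D$ that blocked your approach. Your reduction to Korselt's criterion and your appeal to Dirichlet's theorem in $\F_q[t]$ are the right ingredients; only this combinatorial degree bookkeeping needs to be replaced.
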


\begin{proof}
Let $m$ be the least common multiple of $\deg u$ and the degrees of all the irreducible factors of $u$. 
By Dirichlet's theorem on primes in   arithmetic progressions in
$\mathbb{F}_{q}[t]$ (see \cite[Theorem 4.8]{Rosen}), 
we know that for any sufficiently large integer $d$, in the arithmetic progression $h$ modulo $g$
 there exist $dm-\deg u - 1$ irreducible monic polynomials 
$P_1, \ldots, P_k ~(k=dm-\deg u -1)$ of degree $dm$ and an irreducible monic polynomial $Q$ of degree $dm-\deg u$. 
Then, we obtain square-free polynomials $uP_1 \cdots P_k Q$ of degree $dm(dm-\deg u)$, 
which are Carmichael polynomials by Theorem~\ref{thm:pol-Korselt}. 
\end{proof}

As a consequence, we can confirm the infinitude of Carmichael polynomials in arithmetic progressions. 

\begin{corollary}  \label{cor:ap}
Given two polynomials $g,h\in \F_q[t]$ with $g \ne 0$, 
assume that $\gcd(g,h)$ is either equal to $1$ or square-free. 
Then, there are infinitely many monic Carmichael polynomials congruent to $h$ modulo $g$. 
\end{corollary}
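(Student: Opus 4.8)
The plan is to reduce Corollary~\ref{cor:ap} to Theorem~\ref{thm:const} by choosing the square-free prefactor $u$ carefully so that the resulting Carmichael polynomial $uw$ is congruent to $h$ modulo $g$ and is monic. The subtle point is that Theorem~\ref{thm:const} builds Carmichael polynomials of the shape $uw$ where $w$ has all its irreducible factors lying in a single residue class modulo $g$; we must orchestrate the choice of $u$, and the residue class of the factors of $w$, so that the product lands in the class $h \bmod g$. I would split the argument according to whether $\gcd(g,h)=1$ or $\gcd(g,h)$ is square-free.

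First, suppose $\gcd(g,h)=1$. Then $h$ is a unit modulo $g$. Take $u=1$ (which is square-free), so that we are producing Carmichael polynomials $w$ all of whose irreducible monic factors are congruent to some $h_0$ modulo $g$. By Theorem~\ref{thm:const} applied with this $u$, for every sufficiently large $d$ we get a Carmichael polynomial $w=P_1\cdots P_kQ$ with $k=dm-1$ factors of degree $dm$ and one factor $Q$ of degree $dm-1$, hence $k+1=dm$ irreducible factors in total, each $\equiv h_0 \pmod g$; thus $w \equiv h_0^{dm} \pmod g$. Now vary $d$: as $d$ ranges over a suitable set, $dm$ ranges over an infinite set of integers, and since the unit group $(\F_q[t]/g)^\times$ is finite we want $h_0^{dm}=h$ for infinitely many such $d$. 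Concretely, since $h$ is a unit modulo $g$, and $(\F_q[t]/g)^\times$ is a finite abelian group, I would first check that $h$ is an $e$-th power modulo $g$ for a suitable exponent $e$ coprime to the group order — or, more robustly, simply choose the target residue class $h_0$ for the factors of $w$ to depend on $d$: for each large $d$ pick $h_0$ to be any fixed unit with $h_0^{dm}=h$. Such an $h_0$ exists whenever $\gcd(dm, |(\F_q[t]/g)^\times|)$ divides the order of $h$ in that group; choosing $d$ in an arithmetic progression making $dm$ coprime to $|(\F_q[t]/g)^\times|$ guarantees this (then $x\mapsto x^{dm}$ is a bijection on the unit group, so $h_0$ with $h_0^{dm}=h$ exists and is unique). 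This yields infinitely many monic Carmichael polynomials $\equiv h \pmod g$.

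Second, suppose $\gcd(g,h)=e$ is square-free and non-trivial. Write $e = P_1'\cdots P_r'$ as a product of distinct monic irreducibles. The idea is to absorb these primes into the prefactor: take $u$ to be the product of those $P_i'$ that do not divide $g/e$ — or, more cleanly, arrange $u$ so that $e \mid u$, $u$ square-free, and $\gcd(u, g/\gcd(g,?)) $ is controlled — and then produce $w$ with factors in a residue class that is a unit modulo the ``unit part'' of $g$, so that $uw$ has exactly the right gcd with $g$ and lies in the class $h$. Since $e$ is square-free, $u$ can be chosen square-free containing $e$; then $g/e$ and $h/e$ (where $h=e\cdot h'$, $\gcd(h',g/e)=1$) reduce the problem to the coprime case modulo $g/e$, which we have just solved by varying $d$ and the residue class of the factors of $w$. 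The main obstacle I anticipate is exactly this bookkeeping: ensuring simultaneously that (a) $uw$ is square-free (so $u$ and $w$ share no common irreducible factor, which is automatic if the factors of $w$ all have degree $dm > \deg u$), (b) the congruence class of $uw$ modulo $g$ equals $h$, and (c) everything is monic — none of these is deep, but coordinating the choice of $u$, the exponent $dm$, and the residue class of $w$'s factors against the finite group $(\F_q[t]/g)^\times$ requires care. Once the reductions are set up correctly, the infinitude follows immediately because Theorem~\ref{thm:const} already produces infinitely many distinct $w$ for each admissible configuration.
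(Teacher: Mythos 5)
Your high-level plan --- feed a carefully chosen square-free $u$ into Theorem~\ref{thm:const} --- is the right one, but the execution has a genuine gap, concentrated exactly where the hypothesis on $\gcd(g,h)$ matters. In your first case you try to steer the residue class of $w$ itself by arranging $h_0^{dm}=h$. Besides two slips (with $u=1$ the quantity $m=\operatorname{lcm}(\deg u,\ldots)$ in Theorem~\ref{thm:const} is degenerate, and $Q$ has degree $dm-\deg u=dm$, not $dm-1$), this strategy requires the exponent $N=dm-\deg u$ (the number of irreducible factors of $w$) to be made coprime to $\lvert(\F_q[t]/g\F_q[t])^{\times}\rvert$, or at least to make $h$ an $N$-th power. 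But $N$ runs only through an arithmetic progression with common difference $m$, so if $m$ shares a prime factor with the group order --- which happens routinely, since $q-1$ divides that order --- no choice of $d$ achieves coprimality, and $h$ need not be an $N$-th power for any attainable $N$. Your second case, where this problem genuinely arises because $u$ must be nontrivial, is left as an explicit to-do list (``the bookkeeping requires care'') and is never carried out.

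The paper's proof avoids all of this with one observation you are missing: there is no need to control which power of $h_0$ the product $w$ lands on. Take the irreducible factors of $w$ to be $\equiv 1 \pmod{g}$, so that $w\equiv 1\pmod{g}$ regardless of how many factors it has, and put the entire target class into $u$ by choosing a square-free monic $u\equiv h\pmod{g}$; then $uw\equiv h\pmod{g}$ automatically and Theorem~\ref{thm:const} supplies infinitely many such $w$. The existence of such a $u$ is exactly what the hypothesis buys: if $\gcd(g,h)=1$, take $u$ irreducible in the class $h$ (Dirichlet); if $e=\gcd(g,h)$ is square-free, write $h=eh_1$ and $g=eg_1$, check that $\gcd(g_1,h_1)=1$, and take $u=eu_1$ with $u_1$ irreducible of large degree satisfying $u_1\equiv h_1\pmod{g_1}$. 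This single uniform argument replaces both of your cases and removes the exponent obstruction entirely.
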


\begin{proof}
By assumption and using Dirichlet's theorem on primes in   arithmetic progressions in
$\mathbb{F}_{q}[t]$, 
we have that for any sufficiently large integer $d$, there are square-free monic polynomials $u \in \F_q[t]$ 
of degree $d$ such that $u \equiv h ~({\rm mod}~g)$. 
Fix such a polynomial $u$. 
By Theorem~\ref{thm:const}, we see that 
there are infinitely many square-free monic polynomials $w$ whose irreducible monic factors are all congruent to $1$ modulo $g$ 
 such that $uw$ are Carmichael polynomials.
By construction, we have $uw \equiv h ~({\rm mod}~g)$.   
This completes the proof. 
\end{proof}

We remark that in Corollary~\ref{cor:ap}, if $\gcd(g,h)=1$, 
then for any sufficiently large integer $d$, we can construct such Carmichael polynomials of the form $P_1P_2$, 
where $P_1, P_2$ are irreducible monic polynomials of the same degree satisfying $P_1 \equiv h ~({\rm mod}~g)$ and $P_2 \equiv 1 ~({\rm mod}~g)$. 

However, it is not true that for any composite integer $n$, there exist Carmichael polynomials of degree $n$.  
We can confirm this explicitly and further obtain some quantitative results. 

We first make some preparations. 

For any integer $n \ge 1$,  let $\pi_q(n)$ be the number of monic  irreducible polynomials of degree $n$
 in $\F_q[t]$. 
 It is well-known that (for instance, see \cite[Corollary of Proposition 2.1]{Rosen})
\begin{equation}  \label{eq:piq1}
 \pi_q(n) = \frac{1}{n} \sum_{d \mid n}  \mu(d) q^{n/d}, 
\end{equation}
where $\mu$ is the M{\"o}bius function. 
By \cite[Lemma 4]{Pollack}, we have
\begin{equation}
\label{eq:piq2}
 \frac{q^n}{n} - 2\frac{q^{n/2}}{n} \le \pi_q(n) \le \frac{q^n}{n},  \qquad \pi_q(n) \ge \frac{q^n}{2n}. 
\end{equation}

Moreover, we have:

\begin{lemma}  \label{lem:piq}  
If $q\ge 4$, $\pi_q(n)$ is strictly increasing with respect to $n \ge 1$. 
Besides, both $\pi_2(n)$ and $\pi_3(n)$ are  strictly increasing with respect to $n \ge 2$. 
\end{lemma}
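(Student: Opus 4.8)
The plan is to prove the monotonicity of $\pi_q(n)$ directly from the exact formula \eqref{eq:piq1} together with the elementary bounds in \eqref{eq:piq2}. First I would write $n\pi_q(n) = \sum_{d\mid n}\mu(d)q^{n/d}$ and compare $n\pi_q(n)$ with $(n-1)\pi_q(n-1)$; the dominant term is $q^n$ versus $q^{n-1}$, so heuristically $n\pi_q(n)-(n-1)\pi_q(n-1)$ is roughly $q^n - q^{n-1} = q^{n-1}(q-1)$, which is large, and the error terms coming from the proper divisors are at most on the order of $\sum_{j\le n/2} q^j \le 2q^{n/2}$. The real claim to establish is the clean inequality $\pi_q(n) > \pi_q(n-1)$, which follows once we show $\pi_q(n) - \pi_q(n-1) > 0$.

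The cleanest route: from \eqref{eq:piq2} we have $\pi_q(n) \ge \dfrac{q^n}{n} - \dfrac{2q^{n/2}}{n}$ and $\pi_q(n-1) \le \dfrac{q^{n-1}}{n-1}$. So it suffices to prove
\begin{equation*}
\frac{q^n - 2q^{n/2}}{n} \ge \frac{q^{n-1}}{n-1},
\end{equation*}
equivalently $(n-1)(q^n - 2q^{n/2}) \ge n\,q^{n-1}$, i.e. $(n-1)q^n - nq^{n-1} \ge 2(n-1)q^{n/2}$, i.e. $q^{n-1}\bigl((n-1)q - n\bigr) \ge 2(n-1)q^{n/2}$. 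Dividing by $q^{n/2}$ (harmless since $q\ge 2$), this becomes $q^{n/2-1}\bigl((n-1)q-n\bigr)\ge 2(n-1)$. For $q\ge 4$ and $n\ge 2$ one has $(n-1)q - n = (q-1)(n-1) - 1 \ge 3(n-1)-1 \ge 2(n-1)$ (using $n\ge 2$), while $q^{n/2-1}\ge 1$, so the inequality holds; I would just check the boundary case $n=2$ (and perhaps $n=3$) by hand since there $q^{n/2-1}$ may equal $1$ or $2$ and the estimate is tight but still valid. This handles the first assertion. A small subtlety is that $2q^{n/2}/n$ is the bound only for even-ish exponents; strictly \eqref{eq:piq2} as written is valid for all $n$, so no case split on parity of $n$ is needed here.

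For $q = 2$ and $q = 3$ with $n \ge 2$, the same inequality $q^{n/2-1}\bigl((n-1)q - n\bigr) \ge 2(n-1)$ needs to be re-examined because $q-1$ is only $1$ or $2$. For $q=3$: $(n-1)q - n = 2(n-1) - 1 = 2n-3$, and $3^{n/2-1}(2n-3) \ge 2(n-1)$ holds for $n\ge 2$ (at $n=2$: $3^0\cdot 1 = 1$ versus $2$ — fails!), so for these small fields the crude two-term bound is not enough at the very start, and I would instead verify $\pi_3(2) < \pi_3(3) < \pi_3(4)$ and $\pi_2(2)<\pi_2(3)<\pi_2(4)<\pi_2(5)$ directly from \eqref{eq:piq1} (these are tiny: $\pi_2(2)=1,\pi_2(3)=2,\pi_2(4)=3,\pi_2(5)=6$; $\pi_3(2)=3,\pi_3(3)=8,\pi_3(4)=18$), and then for $n$ large enough the asymptotic argument with the $2q^{n/2}/n$ bound kicks in. So the structure is: an asymptotic inequality valid for all $n$ above an explicit small threshold $n_0(q)$, plus a finite check for $2\le n\le n_0(q)$.

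The main obstacle is precisely the interface between the asymptotic estimate and the small cases for $q=2,3$: the two-term lower bound $q^n/n - 2q^{n/2}/n$ degrades badly for small $q$ and small $n$, so one must locate an honest threshold $n_0$ beyond which the inequality $q^{n/2-1}((n-1)q-n)\ge 2(n-1)$ provably holds (a short induction or a derivative-in-$n$ argument after taking logs suffices, since the left side grows geometrically in $n$ while the right side grows linearly), and then dispatch the handful of remaining small $n$ by direct computation from \eqref{eq:piq1}. Everything else is routine algebraic manipulation of the bounds already recorded in \eqref{eq:piq2}.
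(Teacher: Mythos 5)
Your proposal is correct and follows essentially the same route as the paper: compare consecutive values of $\pi_q(n)$ using the explicit bounds in \eqref{eq:piq2}, with the comparison valid above a small threshold $n_0(q)$ and a direct finite check from \eqref{eq:piq1} for the remaining small $n$ when $q\in\{2,3\}$ (the paper uses the cruder lower bound $q^{n}/(2n)$ for $q\ge 3$ and the two-term bound only for $q=2$, but this is a cosmetic difference). One small point to make explicit in the write-up: at $(q,n)=(4,2)$ your chain degenerates to equality ($\frac{q^2-2q}{2}=q=\pi_4(1)$), so strict monotonicity there really does require the hand check $\pi_4(1)=4<\pi_4(2)=6$ that you flag.
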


\begin{proof}
If $q \ge 5$, then  for any $n \ge 1$, using \eqref{eq:piq2} we have 
$$
\pi_q(n) \le \frac{q^n}{n}  < \frac{q^{n+1}}{2(n+1)} \le \pi_q(n+1). 
$$

If $q=4$, we similarly have for any $n \ge 2$, 
$$
\pi_4(n) \le \frac{4^n}{n}  < \frac{4^{n+1}}{2(n+1)} \le \pi_4(n+1). 
$$
From \eqref{eq:piq1} we directly have $\pi_4(1)=4$ and $\pi_4(2)=6$, and so $\pi_4(1) < \pi_4(2)$.  

If $q=3$, we again have for any $n \ge 3$, 
$$
\pi_3(n) \le \frac{3^n}{n}  < \frac{3^{n+1}}{2(n+1)} \le \pi_3(n+1). 
$$
Using \eqref{eq:piq1}, we get $\pi_3(1)=3, \pi_3(2)=3$ and $\pi_3(3)=8$, 
and thus $\pi_3(2) < \pi_3(3)$.  

If $q=2$, using \eqref{eq:piq2} we also have for any $n \ge 4$, 
$$
\pi_2(n) \le \frac{2^n}{n}  < \frac{2^{n+1}}{n+1} - 2\frac{2^{(n+1)/2}}{n+1} \le \pi_2(n+1). 
$$
From \eqref{eq:piq1} we obtain $\pi_2(1)=2, \pi_2(2)=1, \pi_2(3)=2$ and $\pi_2(4)=3$, 
and so $\pi_2(2) < \pi_2(3) < \pi_2(4)$.  
\end{proof}

For any integer $n \ge 1$, let $C_q(n)$ be the number of monic Carmichael polynomials in $\F_q[t]$ of degree $n$. 
By Theorem~\ref{thm:pol-Korselt}, if $n$ is a prime number and $n \le q$, 
then considering the product of $n$ distinct linear monic polynomials, we have 
$$
C_q(n) = \binom{q}{n}; 
$$
otherwise if $n$ is a prime and $n>q$, we have $C_q(n)=0$. 

\begin{theorem}  \label{thm:lower}
Let $n$ be a composite integer and $\ell$ the smallest prime factor of $n$. 
Then, $C_q(n) = 0$ if and only if $(q,n)=(2,9)$. 
If $(q,n) \ne (2,9)$, then 
$C_q(n) = 1$ if and only if $(q,n)=(2,4)$;  
and moreover, we have 
$$
C_q(n)  \ge  \frac{q^n}{(2n)^\ell}.
$$
\end{theorem}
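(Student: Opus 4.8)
The plan is to produce Carmichael polynomials of degree $n$ by the simplest recipe — taking products of several distinct monic irreducibles all of one common degree $e$ dividing $n$ — to optimize the choice of $e$, and then to settle a short list of small exceptional pairs $(q,n)$ by hand.

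First I would record the basic construction. Set $m=n/\ell$; since $n$ is composite, $n\ge \ell^{2}$, so $m$ is an integer with $m\ge \ell\ge 2$ and $m\mid n$. If $P_{1},\dots,P_{\ell}$ are distinct monic irreducible polynomials in $\F_q[t]$ of degree $m$, then $g=P_{1}\cdots P_{\ell}$ is square-free of degree $n$, composite (because $\ell\ge 2$), and each of its irreducible factors has degree $m\mid n$; hence $g$ is a Carmichael polynomial by Theorem~\ref{thm:pol-Korselt}. Counting such products yields
$$
C_q(n)\ \ge\ \binom{\pi_q(n/\ell)}{\ell}.
$$

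Next comes the quantitative estimate. Whenever $\pi_q(n/\ell)\ge \ell$, I would combine the elementary inequality $\binom{N}{\ell}\ge (N/\ell)^{\ell}$ (valid for $N\ge \ell$) with the bound $\pi_q(m)\ge q^{m}/(2m)$ from \eqref{eq:piq2} to get
$$
C_q(n)\ \ge\ \left(\frac{\pi_q(n/\ell)}{\ell}\right)^{\ell}\ \ge\ \left(\frac{q^{n/\ell}}{2(n/\ell)\ell}\right)^{\ell}\ =\ \frac{q^{n}}{(2n)^{\ell}},
$$
which is exactly the asserted lower bound; in particular $C_q(n)\ge 1$ in this range, and if moreover $\pi_q(n/\ell)\ge \ell+1$ then $\binom{\pi_q(n/\ell)}{\ell}\ge \ell+1\ge 3$, so $C_q(n)\ge 2$.

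The remaining task, and the most delicate step, is to pin down exactly when $\pi_q(n/\ell)\le \ell$. Using the monotonicity of $\pi_q$ (Lemma~\ref{lem:piq}) together with $n/\ell\ge \ell$ and \eqref{eq:piq2}, and the explicit small values $\pi_2(2)=1$, $\pi_2(3)=2$, $\pi_2(4)=3$, $\pi_2(5)=6$, $\pi_3(2)=3$ obtained from \eqref{eq:piq1}, one checks that $\pi_q(n/\ell)\ge \ell+1$ for every composite $n$ and every $q$ except $(q,n)\in\{(2,4),(2,6),(2,9)\}$, and that $\pi_q(n/\ell)\ge \ell$ except for $(q,n)\in\{(2,4),(2,9)\}$; the one point requiring attention is that when $\ell$ is odd the integer $n$ is odd (since $\ell$ is its smallest prime factor), forcing $n/\ell$ to be odd and thereby excluding the would-be extra case $n/\ell=4$, so otherwise this is a routine finite verification. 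I would then dispose of the three exceptional pairs directly from Theorem~\ref{thm:pol-Korselt}: for $(2,6)$, the $\pi_2(3)\pi_2(2)\pi_2(1)=4$ products $PQR$ with $P,Q,R$ monic irreducible of degrees $3,2,1$ are Carmichael of degree $6$, so $C_2(6)\ge 4$; for $(2,4)$, the only square-free composite monic polynomial of degree $4$ over $\F_2$ whose factor-degrees all divide $4$ is $t(t+1)(t^{2}+t+1)$ (there is a single monic irreducible of degree $2$ and only two of degree $1$, while a degree-$4$ factor would be irreducible), so $C_2(4)=1$; and for $(2,9)$, the two available monic irreducibles of degree $3$ together with the two of degree $1$ reach total degree only $8<9$, so no Carmichael polynomial of degree $9$ exists over $\F_2$, i.e. $C_2(9)=0$. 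Assembling these facts gives $C_q(n)=0$ if and only if $(q,n)=(2,9)$; that, for $(q,n)\ne(2,9)$, $C_q(n)=1$ if and only if $(q,n)=(2,4)$ (everywhere else $C_q(n)\ge 3$, except $C_2(6)\ge 4$); and that $C_q(n)\ge q^{n}/(2n)^{\ell}$ holds by the displayed estimate in the generic range, while for $(2,4)$ and $(2,6)$ one simply notes $q^{n}/(2n)^{\ell}<1\le C_q(n)$.
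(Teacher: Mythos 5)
Your proposal is correct and follows essentially the same route as the paper: construct Carmichael polynomials as products of $\ell$ distinct monic irreducibles of degree $n/\ell$, bound $C_q(n)\ge\binom{\pi_q(n/\ell)}{\ell}\ge(\pi_q(n/\ell)/\ell)^\ell\ge q^n/(2n)^\ell$ via \eqref{eq:piq2}, and isolate the same exceptional pairs $(2,4)$, $(2,6)$, $(2,9)$ by the same threshold analysis $q^m>2m^2$ plus small-case checks. Your treatment is in fact slightly more careful than the paper's at the borderline case $(2,6)$, where $\pi_2(3)=\ell$ exactly, but the substance is identical.
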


\begin{proof}

Since $\pi_2(1)=2, \pi_2(2)=1$ and $\pi_2(3)=2$, by Theorem~\ref{thm:pol-Korselt} we have $C_2(4)=1, C_2(9) = 0$. 

If $\pi_q(n/\ell) > \ell$, then 
we can choose polynomials $g$ to be the product of $\ell$ distinct irreducible monic polynomials of degree $n/\ell$. 
By Theorem~\ref{thm:pol-Korselt}, they are Carmichael polynomials. 
Counting these polynomials, we have 
\begin{equation}  \label{eq:Cqn}
C_q(n)  \ge \binom{\pi_q(n/\ell)}{\ell} \ge \pi_q(n/\ell) > \ell \ge 2. 
\end{equation}  
So, it remains to find the condition when $\pi_q(n/\ell) > \ell$. 

If $q\ge 3$, using \eqref{eq:piq2} and noticing $q^m > 2m^2$ for any integer $m \ge 1$,  we obtain  
$$
\pi_q(n/\ell)  \ge \frac{q^{n/\ell}}{2n/\ell} > \frac{2(n/\ell)^2}{2n/\ell} = n/\ell \ge \ell. 
$$

Similarly, if $q=2$, using \eqref{eq:piq2} and noticing $2^m > 2m^2$ for any integer $m \ge 7$,  
we obtain  for $n / \ell \ge 7$,  
$$
\pi_2(n/\ell)  \ge \frac{2^{n/\ell}}{2n/\ell} > \frac{2(n/\ell)^2}{2n/\ell} = n/\ell \ge \ell. 
$$
If $n / \ell \le 6 $, then $\ell \le 6$, and so $n \le 36$. 
Thus, we only need to consider composite integers $n \le 36$. 
There are only three cases $\ell =2, 3, $ or $5$. 

If $\ell = 2$ and $n \ge 8$, by Lemma~\ref{lem:piq} we have $\pi_2(n/2) \ge \pi_2(4)=3 > 2$.  

If $\ell = 3$ and $n \ge 15$,  
 by Lemma~\ref{lem:piq} we have $\pi_2(n/3) \ge \pi_2(5)=6 > 3$.  
 
Now, if $\ell = 5$, then $n \ge 25$, 
 and we have $\pi_2(n/5) \ge \pi_2(5)=6 > 5$.  
 
So, it remains to consider $n=6$ when $q=2$. 
By \eqref{eq:piq1}, it is easy to see that  $\pi_2(6/2)=2$ and $C_2(6) = 5$.  

Hence, $\pi_q(n/\ell) > \ell$ (and so \eqref{eq:Cqn}) holds for $q\ge 3$, or $q=2$ and composite $n \ne 4,6,9$. 

Collecting the above considerations, if $(q,n) \ne (2,4), (2,9)$, 
 then $\pi_q(n/\ell) \ge \ell$, 
and so, by \eqref{eq:Cqn} we have 
$$
C_q(n)  \ge \binom{\pi_q(n/\ell)}{\ell}, 
$$
which, together with \eqref{eq:piq2}, implies that 
$$
C_q(n) \ge \big( \pi_q(n/\ell)/\ell \big)^\ell  \ge  q^n / (2n)^\ell. 
$$
This inequality also covers the case $(q,n) = (2,4)$ since $C_2(4)=1$.  
\end{proof}

Now, we want to get an upper bound for $C_q(n)$, 
which implies that the natural density of Carmichael polynomials is zero. 

\begin{theorem}  \label{thm:upper}
Let $n$ be a composite number. 
Then, for any $0< \varepsilon < 1/2$, 
there exists a contant $c(q,\varepsilon)$ such that 
if $n > c$, we have 
$$
C_q(n)  \le  \frac{q^n}{n^{1/2-\varepsilon}}.  
$$
\end{theorem}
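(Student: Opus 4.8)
The plan is to bound $C_q(n)$ by splitting the monic Carmichael polynomials of degree $n$ according to the degree of their largest irreducible factor, and to handle the two resulting ranges by two different elementary devices. Fix $\varepsilon\in(0,1/2)$ and set $y=\lceil\sqrt n\,\rceil$. By Theorem~\ref{thm:pol-Korselt}, a monic Carmichael polynomial $g$ of degree $n$ is square-free, has at least two irreducible factors, and each of its irreducible factors $P$ satisfies $\deg P\mid n$; since $g$ has at least two factors we have $\deg P<n$, and a proper divisor of $n$ is at most $n/2$, so in fact $\deg P\le n/2$. I call $g$ of \emph{type (a)} if it has an irreducible factor of degree $\ge y$, and of \emph{type (b)} otherwise, so that $C_q(n)\le C_q^{(a)}(n)+C_q^{(b)}(n)$.

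For type (a) I would simply remove one large irreducible factor: writing $g=Ph$ with $P$ irreducible of degree $d$ (so $d\mid n$ and $y\le d\le n/2$) and $h$ monic of degree $n-d$, each type-(a) polynomial is obtained at least once, and using $\pi_q(d)\le q^d/d$ from \eqref{eq:piq2} together with the fact that there are $q^{n-d}$ monic polynomials of degree $n-d$,
$$
C_q^{(a)}(n)\ \le\ \sum_{\substack{d\mid n\\ y\le d\le n/2}}\pi_q(d)\,q^{\,n-d}\ \le\ q^n\sum_{\substack{d\mid n\\ d\ge y}}\frac1d\ \le\ \frac{q^n\,d(n)}{y},
$$
where $d(n)$ denotes the number of divisors of $n$. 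Since $d(n)=n^{o(1)}$, for $n$ large this is at most $q^n/n^{1/2-\varepsilon}$.

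For type (b) the polynomial $g$ has all irreducible factors of degree $<y=\lceil\sqrt n\,\rceil$, hence is $y$-smooth, and it suffices to bound the number $\Psi_q(n,y)$ of all $y$-smooth monic polynomials of degree $n$. Here I would apply Rankin's trick: for $0<\sigma<1$,
$$
\Psi_q(n,y)\ \le\ q^{\sigma n}\prod_{\substack{P\ \text{irreducible}\\ \deg P\le y}}\bigl(1-q^{-\sigma\deg P}\bigr)^{-1},
$$
and with the choice $\sigma=1-1/y$, estimating the Euler product by $\pi_q(j)\le q^j/j$ yields $\Psi_q(n,y)\le q^n\exp\bigl(-\tfrac ny\log q+O_q(\log y)\bigr)\le q^n e^{-c(q)\sqrt n}$, which for large $n$ is far below $q^n/n^{1/2-\varepsilon}$. (Alternatively, one may invoke a standard upper estimate for smooth polynomials over $\F_q$.)

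Combining the two cases gives $C_q(n)\le q^n d(n)/\sqrt n+q^n e^{-c(q)\sqrt n}$, and since $d(n)\le n^{\varepsilon/2}$ and $\sqrt n\,e^{-c(q)\sqrt n}\le 1$ for all sufficiently large $n$, this is $\le q^n/n^{1/2-\varepsilon}$ once $n>c(q,\varepsilon)$. The main obstacle is the type-(b) estimate: the heart of the argument is that $\sqrt n$-smooth polynomials of degree $n$ are rare, and this is precisely where Rankin's trick (or a smoothness estimate) is needed; the type-(a) bound is routine, and the balance point $y=\sqrt n$ between the two cases is what produces the exponent $1/2-\varepsilon$.
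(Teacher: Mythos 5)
Your proof is correct, and it takes a genuinely different route from the paper's. The paper parametrizes monic Carmichael polynomials of degree $n$ by the tuple $(k_1,\ldots,k_r)$ recording how many irreducible factors of each proper divisor degree $d_i$ occur, bounds $C_q(n)\le q^n S(n)$ with $S(n)=\sum 1/(d_2^{k_2}\cdots d_r^{k_r})$, and then shows $S(n)$ is small by embedding it in the truncated product $W(n)=\prod_i(1+1/d_i+\cdots)$ and deleting a ``main part'' $V(n)$ whose terms cannot arise because their total degree falls short of $n$; the remainder is controlled via $\sigma(n)/n\ll\log\log n$ and $\tau(n)=n^{o(1)}$. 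You instead use the classical dichotomy at the threshold $y=\lceil\sqrt n\,\rceil$: polynomials with an irreducible factor of degree $\ge y$ are counted crudely as $\sum_{d\mid n,\,d\ge y}\pi_q(d)q^{n-d}\le q^n\tau(n)/\sqrt n$ (here it matters, and you correctly note, that every factor degree is a divisor of $n$ that is at most $n/2$), while the remaining polynomials are $y$-smooth and Rankin's trick with $\sigma=1-1/y$ gives the much stronger bound $q^ne^{-c(q)\sqrt n}$. Both arguments land on the same bottleneck, namely $\tau(n)/\sqrt n=n^{-1/2+o(1)}$, so they yield the same quality of result; yours is shorter and uses standard smooth-number technology plus Wigert's bound on $\tau(n)$, whereas the paper's is more self-contained (it is, in effect, a hand-rolled version of the same Euler-product comparison) and additionally exploits Robin's explicit bound on $\sigma(n)/n$. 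All the individual steps you outline check out: the surjection $(P,h)\mapsto Ph$ onto type-(a) polynomials, the bound $\pi_q(d)\le q^d/d$, and the estimate of the Euler product by $O_q(\log y)$ in the exponent are each sound.
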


\begin{proof}
We first arrange all the proper factors $d_1, \ldots, d_r$ of $n$ as follows:
$$
1=d_1 < d_2 < \cdots < d_r < n,
$$
where $r$ is the number of proper factors of $n$. 
We define a subset of $r$-tuples of non-negative integers: 
$$
T(n) = \{(k_1,\ldots,k_r): \, k_1d_1 + \cdots + k_rd_r = n, k_1 \le q \}. 
$$
Note that since $d_1=1$, for each tuple 
$(k_1,\ldots,k_r)$ in $T(n)$, $k_1$ is fixed when $k_2, \ldots, k_r$ are all fixed. 

For any monic Carmichael polynomial of degree $n$, by definition 
the degree of each of its irreducible monic factors divides $n$, 
and so it corresponds to one tuple in $T(n)$ by collecting the degrees of its irreducible factors.  
Conversely, every tuple $(k_1,\ldots,k_r)$ in $T(n)$ corresponds to 
$$
\binom{\pi_q(d_1)}{k_1} \cdots \binom{\pi_q(d_r)}{k_r}
$$
distinct monic Carmichael polynomials of degree $n$. 

Hence, using \eqref{eq:piq2} we obtain    
\begin{equation}  \label{eq:Cqn2}
\begin{split}
C_q(n) & = \sum_{(k_1,\ldots,k_r) \in T(n)} \binom{\pi_q(d_1)}{k_1} \cdots \binom{\pi_q(d_r)}{k_r}  \\
& \le \sum_{(k_1,\ldots,k_r) \in T(n)} \pi_q(d_1)^{k_1} \cdots \pi_q(d_r)^{k_r} \\
& \le \sum_{(k_1,\ldots,k_r) \in T(n)} \frac{q^{k_1d_1}}{d_1^{k_1}} \cdots \frac{q^{k_rd_r}}{d_r^{k_r}} 
= q^n \sum_{(k_1,\ldots,k_r) \in T(n)} \frac{1}{d_2^{k_2} \cdots d_r^{k_r}}.  
\end{split}
\end{equation}
So, it remains to estimate the summation 
$$
S(n) = \sum_{(k_1,\ldots,k_r) \in T(n)} \frac{1}{d_2^{k_2} \cdots d_r^{k_r}}.
$$ 

Note that for each tuple $(k_1,\ldots,k_r) \in T(n)$, we have $k_i \le n/d_i$ for each $2 \le i \le r$ and 
\begin{equation}  \label{eq:k2kr}
n-q \le k_2d_2 + \cdots + k_rd_r \le n. 
\end{equation} 
Put 
\begin{equation*}
W(n) = \prod_{i=2}^{r} (1+ \frac{1}{d_i} + \frac{1}{d_i^2} + \cdots + \frac{1}{d_i^{n/d_i}}). 
\end{equation*}
Clearly, $S(n)$ is a part of the summation $W(n)$ (after expanding the products).  
In the sequel, we estimate $S(n)$ by distinguishing the main part of $W(n)$. 

To estimate $W(n)$, we first have 
\begin{equation*}
\begin{split}
 \log W(n) & < \log \prod_{i=2}^{r} \frac{1}{1-1/d_i}  = -\sum_{i=2}^{r} \log (1-1/d_i) \\
 & = \sum_{i=2}^{r} \Big(\frac{1}{d_i} + \frac{1}{2d_i^2} + \frac{1}{3d_i^3} + \cdots \Big) \\
 & < \sum_{i=2}^{r} \Big(\frac{1}{d_i} + \frac{1}{d_i^2}\Big) \\
 &  \le \frac{\sigma(n)}{n} -1 - \frac{1}{n} + \int_{1}^{n} x^{-2} \, \mathrm{d} x  < \sigma(n) / n, 
\end{split}
\end{equation*}
where $\sigma(n)$ as usual is the sum of all the factors of $n$.  
Using a classical result of Robin \cite[Th{\'e}or{\`e}me 2]{Robin} that 
\begin{equation*}
\frac{\sigma(n)}{n} < \exp(\gamma) \log\log n + \frac{0.6483}{\log\log n}, \quad n \ge 3, 
\end{equation*}
where $\gamma$ is the Euler-Mascheroni constant ($\gamma=0.577215664901532\ldots$), 
we directly have for $n \ge 268$
\begin{equation*}
\frac{\sigma(n)}{n} < 2 \log\log n - 0.2189 \log\log n + \frac{0.6483}{\log\log n} < 2\log\log n. 
\end{equation*}
Hence,  we obtain 
\begin{equation}  \label{eq:W(n)}
W(n) < (\log n)^2, \qquad n \ge 268. 
\end{equation}

Now, we want to find the main part of $W(n)$. 
For a fixed $0 < \varepsilon < 1/2$,  let $j \ge 1$ be the unique index satisfying  
\begin{equation*}
  1=d_1 < d_2 < \cdots < d_j <  (n-q)^{(1-\varepsilon)/2} \le d_{j+1} < \cdots < d_r. 
\end{equation*}
For each $2 \le i \le r$, let 
$$
m_i = \lfloor ((n-q)/d_i)^{2\varepsilon /(1+ \varepsilon)} \rfloor. 
$$
Then, since 
\begin{equation*}
\begin{split}
\sum_{i=2}^{j} m_i d_i 
& \le (n-q)^{2\varepsilon / (1+ \varepsilon)} \sum_{i=2}^{j} d_i^{(1- \varepsilon)/(1+ \varepsilon)} \\ 
& < (n-q)^{2\varepsilon / (1+ \varepsilon)} \int_{1}^{(n-q)^{(1- \varepsilon)/2}} x^{(1- \varepsilon)/(1+ \varepsilon)} \, \mathrm{d} x \\
& < (n-q)^{2\varepsilon / (1+ \varepsilon)} \cdot (n-q)^{(1- \varepsilon) / (1+ \varepsilon)} = n-q, 
\end{split}
\end{equation*}
in view of \eqref{eq:k2kr} we know that any summation term of 
$$
V(n) = \prod_{i=2}^{j} (1+ \frac{1}{d_i} + \frac{1}{d_i^2} + \cdots + \frac{1}{d_i^{m_i}}) 
$$
(after expanding the products) does not appear in $S(n)$. 
Thus, we have 
\begin{equation}  \label{eq:SWV}
S(n) \le W(n) - V(n). 
\end{equation}
It suffices to estimate $W(n)-V(n)$.  

For each $2 \le i \le j$, we have 
\begin{equation}   \label{eq:i<j}
\begin{split}
\frac{1}{d_i^{m_i + 1}} + \frac{1}{d_i^{m_i +2}} + \cdots + \frac{1}{d_i^{n/d_i}} 
& < \frac{1/ d_i^{m_i + 1}}{1-1/d_i}  = \frac{1}{(d_i -1)d_i^{m_i}} \\
& \le 2^{-m_i} \le  2^{1-(n-q)^{\varepsilon}}. 
\end{split}
\end{equation}
On the other hand, for each $j+1 \le i \le r$ we have 
\begin{equation}  \label{eq:i>j}
\begin{split}
\frac{1}{d_i} + \frac{1}{d_i^{2}} + \cdots + \frac{1}{d_i^{n/d_i}} 
& < \frac{1/ d_i}{1-1/d_i}  = \frac{1}{d_i -1} \\
& \le  \frac{1}{(n-q)^{(1- \varepsilon)/2} - 1}. 
\end{split}
\end{equation}  
Therefore, combining \eqref{eq:i<j}, \eqref{eq:i>j} with \eqref{eq:W(n)}, we deduce that 
\begin{equation}  \label{eq:WV1}
\begin{split}
W(n) - V(n) & \le 
\sum_{i=2}^{j}  \Big( \frac{1}{d_i^{m_i + 1}} + \frac{1}{d_i^{m_i +2}} + \cdots + \frac{1}{d_i^{n/d_i}} \Big) W(n)  \\
& \quad + \sum_{i=j+1}^{r}  \Big( \frac{1}{d_i} + \frac{1}{d_i^2} + \cdots + \frac{1}{d_i^{n / d_i}} \Big)W(n)  \\
& \le \Big( 2^{1-(n-q)^{\varepsilon}}  + ((n-q)^{(1- \varepsilon)/2}-1)^{-1} \Big) (\log n)^2 \tau(n), 
\end{split}
\end{equation}
where $\tau(n)$ is the number of factors of $n$. 
For $\tau(n)$, a classical result of Wigert says that (see, for instance, \cite[Theorem 13.12]{Apostol})
\begin{equation*}
\tau(n) = n^{O(1/\log\log n)}. 
\end{equation*}
Hence, for sufficiently large $n$ (depending on $q,\varepsilon$), \eqref{eq:WV1} becomes 
\begin{equation}  \label{eq:WV}
W(n) - V(n) \le  n^{-1/2 + \varepsilon}. 
\end{equation} 
Finally, the desired result follows from \eqref{eq:Cqn2},  \eqref{eq:SWV} and \eqref{eq:WV}.   
\end{proof}

\begin{corollary}
The natural density of Carmichael polynomials in $\F_q[t]$ is zero. 
That is, we have 
$$
\lim_{n \to \infty} \frac{C_q(1)+C_q(2)+ \cdots + C_q(n)}{q^n} = 0. 
$$
\end{corollary}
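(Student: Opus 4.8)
The plan is to obtain the corollary as a soft consequence of Theorem~\ref{thm:upper}, combined with two elementary inputs: first, $C_q(m)\le q^{m}$ for every $m\ge 1$, since $q^{m}$ is the total number of monic polynomials of degree $m$ in $\F_q[t]$; and second, the observation recorded just before Theorem~\ref{thm:lower} that $C_q(m)=0$ whenever $m$ is a prime with $m>q$. Fix $\varepsilon=1/4$ (any admissible value does), and let $c=c(q,1/4)$ be the constant furnished by Theorem~\ref{thm:upper}; after enlarging $c$ we may assume $c\ge q$, so that $C_q(m)=0$ for every prime $m>c$, while $C_q(m)\le q^{m}/m^{1/4}$ for every composite $m>c$.

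First I would split the partial sum according to the size of the index:
\[
\sum_{m=1}^{n} C_q(m)=\sum_{m\le c} C_q(m)+\sum_{c<m\le n} C_q(m)
\;\le\; c\,q^{c}+\sum_{\substack{c<m\le n\\ m\ \mathrm{composite}}}\frac{q^{m}}{m^{1/4}}
\;\le\; c\,q^{c}+\sum_{m=1}^{n}\frac{q^{m}}{m^{1/4}},
\]
where in the middle step the terms with $c<m\le n$ and $m$ prime vanish by the choice of $c$, the composite terms are bounded by Theorem~\ref{thm:upper}, and the last inequality just drops the restriction to composite $m$ in a sum of positive terms. The contribution $c\,q^{c}$ is a constant depending only on $q$, so it is negligible after division by $q^{n}$; it therefore remains to show $\sum_{m=1}^{n}q^{m}/m^{1/4}=o(q^{n})$.

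This last point follows from the geometric growth of $q^{m}$ by a dyadic split. Writing $M=\lfloor n/2\rfloor$ and using $m^{1/4}\ge 1$ for $m\le M$ and $m^{1/4}\ge M^{1/4}$ for $m>M$,
\[
\sum_{m=1}^{n}\frac{q^{m}}{m^{1/4}}\;\le\;\sum_{m=1}^{M}q^{m}+\frac{1}{M^{1/4}}\sum_{m=M+1}^{n}q^{m}
\;\le\;\frac{q^{M+1}}{q-1}+\frac{q^{n+1}}{(q-1)\,M^{1/4}},
\]
so that $q^{-n}\sum_{m=1}^{n}q^{m}/m^{1/4}=O(q^{-n/2})+O(n^{-1/4})\to 0$ as $n\to\infty$. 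Combining the two displays gives $\big(C_q(1)+C_q(2)+\cdots+C_q(n)\big)/q^{n}\to 0$, which is exactly the claim. I do not expect a genuine obstacle here: the corollary is an immediate quantitative consequence of the already-proved upper bound, and the only point requiring (minimal) care is precisely the dyadic estimate above, which rules out the a priori possibility that the bulk of $\sum_{m\le n}q^{m}/m^{1/4}$ has order $q^{n}$ rather than $q^{n}/n^{1/4}$.
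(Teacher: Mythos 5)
Your proof is correct, and it follows the route the paper intends: the corollary is stated without proof as an immediate consequence of Theorem~\ref{thm:upper}, and your derivation (handling the finitely many small indices, the vanishing prime-degree terms, and the geometric-sum estimate showing $\sum_{m\le n} q^m/m^{1/4} = o(q^n)$) supplies exactly the omitted details. No gaps.
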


Finally, we extend the concept of Carmichael polynomials as the integer case. 

Recall that for any integer $d \ge 1$, a \textit{rigid Carmicahel number of order $d$} is a composite square-free integer $n$ satisfying 
$p^{i}-1\mid n^{d}-1$  for all primes $p\mid n$ and all $1 \le  i \leq d$ 
(see \cite{Howe} or the  comments after Theorem 2.7 in~\cite{Steele}). 
It is conjectured that there are infinitely many rigid Carmichael numbers of order $d$ for any $d \ge 2$. 

Similarly, we define a \textit{rigid Carmichael polynomial of order $d$} in $\F_q[t]$ to be 
a reducibe square-free polynomial $g \in \F_q[t]$ satisfying 
$ i \deg P \mid d \deg g$ for any irreducible polynomial 
$P$ dividing $g$ and any $i = 1,\ldots d$.  
For example, let $g=P_{1}P_{2}$ with $\textrm{deg}P_{i} = 3$
and $d = 3$, then $g$ is a Carmichael polynomial of order 3 in
$\mathbb{F}_{q}[t]$.

\begin{theorem}  \label{thm:rigid} 
For any positive integer $d$, there exist infinitely many rigid monic Carmichael polynomials  of order $d$ in 
$\F_{q}[t]$. 
\end{theorem}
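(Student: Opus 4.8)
The plan is to exhibit, for each fixed $d$, a large family of such polynomials all of whose irreducible factors share one common degree $n$; for such polynomials the rigidity condition collapses to a single divisibility among integers that we can force by choosing the number of factors suitably, and the infinitude then follows by letting $n$ grow.

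First I would set $M=\mathrm{lcm}(1,2,\ldots,d)$ and note that $d\mid M$. Fix any integer $k\ge 2$ that is a multiple of $M$ (for definiteness $k=2M$ works uniformly; more economically one may take $k=M/d$ when $d\ge 3$ and $k=2$ when $d\le 2$). The key observation is: if $P_1,\ldots,P_k$ are distinct monic irreducible polynomials in $\F_q[t]$ each of degree $n$, and $g=P_1\cdots P_k$, then $g$ is a rigid monic Carmichael polynomial of order $d$. Indeed $g$ is monic and square-free (the $P_j$ being distinct), and reducible since $k\ge 2$; moreover $\deg g=kn$, so for every irreducible factor $P\mid g$ we have $\deg P=n$, and for each $1\le i\le d$ the quantity $i\deg P=in$ divides $d\deg g=dkn$ because $i\mid M\mid k$. (As a side remark, since also $\deg P=n\mid kn=\deg g$, Theorem~\ref{thm:pol-Korselt} shows that such a $g$ is genuinely a Carmichael polynomial, which is consistent with the terminology.)

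It then remains to produce infinitely many such $g$. Here I would invoke the lower bound $\pi_q(n)\ge q^n/(2n)$ from \eqref{eq:piq2}: it shows $\pi_q(n)\to\infty$, so for all sufficiently large $n$ we have $\pi_q(n)\ge k$, and hence there are $\binom{\pi_q(n)}{k}\ge 1$ ways to pick $k$ distinct monic irreducibles of degree $n$. Each choice yields a rigid monic Carmichael polynomial of order $d$ of degree $kn$; letting $n$ range over all sufficiently large integers produces polynomials of unboundedly many distinct degrees, hence infinitely many pairwise distinct rigid monic Carmichael polynomials of order $d$ in $\F_q[t]$.

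I do not expect a genuine obstacle: the only point requiring care is the bookkeeping behind $i\deg P\mid d\deg g$ — namely, choosing the number $k$ of factors so that $\mathrm{lcm}(1,\ldots,d)\mid dk$ while keeping $k\ge 2$ to guarantee reducibility — after which the positivity (indeed unboundedness) of the count $\binom{\pi_q(n)}{k}$ for large $n$ is immediate from the standard estimate for $\pi_q(n)$.
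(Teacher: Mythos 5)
Your proposal is correct and follows essentially the same route as the paper: both take $g$ to be a product of $k$ distinct monic irreducibles of a common degree $n$ with $\mathrm{lcm}(1,\ldots,d)\mid k$ (the paper uses $k=\mathrm{lcm}(1,\ldots,d)$ itself and reduces $d=1$ to the ordinary Carmichael case), verify $i\deg P=in\mid d\deg g=dkn$, and let $n\to\infty$ using $\pi_q(n)\to\infty$. The only differences are cosmetic choices of $k$.
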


\begin{proof}  
We only need to consider the case when $d \ge 2$.
Fix a positive integer $d \ge 2$. Let $m$ be the least common multiple of $1,2,\ldots, d$.
For any positive integer $n$ satisfying $\pi_q(n) \ge m$, we can construct polynomials $g=P_1 \cdots P_m$, 
where  $P_1, \ldots, P_m$ are distinct monic irreducible polynomials of degree $n$. 
Then, $\deg g= mn$. Thus, for  any  $1\leq j \leq m$ and $1 \le i \leq d$, we have $i \deg P_{j}=in \mid
d \deg g=dmn$, and so $g$ is a rigid Carmichael polynomial of order $d$. 
Letting  $n$ go to $\infty$, we get infinity many such polynomials $g$. 
This completes the proof. 
\end{proof}

\section{Carmichael elements in function fields }  
\label{sec:Car-ele}

Let $K$ be a function field (that is, a finite extension over $\F_q(t)$), and let $\cO_K$ be the ring of integers of $K$. 
We say that an element $\alpha \in \cO_K$ is Carmichael in $K$ 
if $\alpha$ is a Carmichael element of $\cO_K$ (see Definition~\ref{def:Car-ideal}).  

In this section, as the number field case \cite{Steele}, we consider the following questions: 
\begin{itemize}
\item[(1)]  For any function field $K$, does it have infinitely many Carmichael elements? 

\item[(2)] For any square-free polynomial $g$ in $\F_q[t]$, is it Carmichael in infinitely many function fields with discriminant relatively prime to $g$? 

\item[(3)] For any square-free polynomial $g$ in $\F_q[t]$, is it not Carmichael in infinitely many function fields with discriminant relatively prime to $g$? 
\end{itemize}
We give a definite answer to the first question (see Corollary~\ref{cor:Car-K} below) 
and some partial answers to the second and third questions whose answers we conjecture are both positive.  
 
First, we consider the case of Carmichael polynomials in $\F_q[t]$. 

\begin{theorem}  \label{thm:ext-Car1}
Let $g$ be a Carmichael polynomial in $\F_q[t]$. 
Then, $g$ is Carmichael in any finite Galois extension over $\mathbb{F}_{q}(t)$ with 
discriminant relatively prime to $g$.
\end{theorem}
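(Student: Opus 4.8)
The plan is to reduce the statement to an application of Corollary~\ref{cor:Ded-prime} together with Theorem~\ref{thm:pol-Korselt}, using the factorization of $g$ into irreducibles and the transitivity of the Galois action on primes above each $\mathfrak{p}$. Write $L$ for the given finite Galois extension of $\F_q(t)$, with $\cO_L$ the integral closure of $\F_q[t]$ in $L$, and let $d = [L:\F_q(t)]$. Since $g$ is a Carmichael polynomial, by Theorem~\ref{thm:pol-Korselt} it is square-free, say $g = P_1 \cdots P_s$ with the $P_i$ distinct monic irreducibles, and $\deg P_i \mid \deg g$ for every $i$. Because $g$ is relatively prime to the discriminant $\mathrm{Disc}(L/\F_q(t))$, each prime $(P_i)$ is unramified in $L$, so $g\cO_L = \prod_i P_i\cO_L$ is square-free; in particular condition (1) of Theorem~\ref{thm:Ded-Korselt} holds for $g\cO_L$, and condition (2) is automatic since $N(g\cO_L) = q^{d\deg g}$ is finite.

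Next I would verify condition (3) of Theorem~\ref{thm:Ded-Korselt} for $g\cO_L$: for each prime $\mathfrak{P}$ of $\cO_L$ lying above some $(P_i)$, we need $N_L(\mathfrak{P}) - 1 \mid N_L(g\cO_L) - 1$, i.e. $q^{f(\mathfrak{P})\deg P_i} - 1 \mid q^{d\deg g} - 1$, which is equivalent to $f(\mathfrak{P})\deg P_i \mid d\deg g$. Since $L/\F_q(t)$ is Galois, the decomposition groups of all primes above $(P_i)$ are conjugate, so $f(\mathfrak{P})$ is the same for every $\mathfrak{P} \mid (P_i)$ and, moreover, $f(\mathfrak{P}) \mid d$. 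Combined with $\deg P_i \mid \deg g$ from Theorem~\ref{thm:pol-Korselt}, this gives $f(\mathfrak{P})\deg P_i \mid d\deg g$ as required. (Alternatively, one can phrase this more cleanly: apply Corollary~\ref{cor:Ded-prime} to the Dedekind domain $\F_q[t]$ and each prime $(P_i)$, concluding that each $P_i\cO_L$ is prime or Carmichael, hence in particular $q^{f(\mathfrak{P})\deg P_i}-1 \mid q^{d \deg P_i}-1 \mid q^{d\deg g}-1$.) Finally one checks that $g\cO_L$ is a composite ideal: it has at least two prime factors because either $s \ge 2$, or $s = 1$ and $P_1\cO_L$ splits into $\ge 2$ primes (it cannot be prime, since then $f(\mathfrak{P}) = d$ would force $d \deg P_1 \mid \deg g = \deg P_1$, impossible unless $d = 1$, i.e. $L = \F_q(t)$ — but then a prime polynomial is not composite, contradicting that $g$ is Carmichael over $\F_q[t]$). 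Having verified (1)--(3), Theorem~\ref{thm:Ded-Korselt} yields that $g\cO_L$ is a Carmichael ideal, i.e. $g$ is Carmichael in $L$.

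The only mildly delicate point — the "main obstacle" — is the bookkeeping around condition that the resulting ideal be composite rather than prime, and making sure the divisibility $f(\mathfrak{P})\deg P_i \mid d \deg g$ is argued uniformly across the (possibly several) primes above each $(P_i)$; this is where Galois-ness is genuinely used, via the conjugacy of decomposition groups. Everything else is a direct substitution of norms in function fields ($N_L(\mathfrak{P}) = q^{f(\mathfrak{P})\deg P_i}$, $N_L(g\cO_L) = q^{d\deg g}$) into Theorem~\ref{thm:Ded-Korselt}, so I expect the write-up to be short.
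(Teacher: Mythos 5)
Your proposal is correct and follows essentially the same route as the paper's proof: verify the Korselt criterion (Theorem~\ref{thm:Ded-Korselt}) for $g\cO_L$ using unramifiedness from the discriminant hypothesis and the divisibility $f(\mathfrak{P})\deg P_i \mid d\deg g$ coming from $f(\mathfrak{P})\mid d$ (Galois) and $\deg P_i \mid \deg g$ (Theorem~\ref{thm:pol-Korselt}). Your extra care about compositeness is fine but the $s=1$ case is vacuous, since a Carmichael polynomial in $\F_q[t]$ already has at least two distinct irreducible factors.
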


\begin{proof}
Suppose that $K$ is  a finite Galois extension over $\mathbb{F}_{q}(t)$
with degree $d$ and discriminant ${\rm Disc}(K)$. 
For any irreducible factor $P$ of $g$, 
let $f(P)$ be the residue class degree of $P$ in $K/\mathbb{F}_{q}(t)$. 
Due to the choice of $K$, we have $f(P) \mid d$.   
Since $g$ is relatively prime to Disc$(K)$, each irreducible factor of $g$ is unramified in $K/\F_q(t)$.
Note that $g$ is a Carmichael polynomial in $\F_q[t]$. 
Then, the ideal $g\mathcal{O}_{K}$ is square-free, 
and for any irreducible factor $P$ of $g$, we have $\deg P\mid \deg g$.  
Given a prime ideal  $\mathfrak{p}$ of $K$ lying above $P$, we have 
$$
N_K(\mathfrak{p}) = N_{\F_q(t)}(P)^{f(P)} = q^{f(P)\deg P}. 
$$
Then, noticing $N_K(g\cO_K) = q^{d \deg g}$ and $f(P)\deg P \mid d \deg g$, we have 
$$
N_K(\mathfrak{p}) - 1 \mid  N_K(g\cO_K) - 1. 
$$
Hence, from Theorem~\ref{thm:Ded-Korselt}, $g$ is Carmichael in $K$.
\end{proof}

We remark that the number field case does not have a similar result as the above theorem; 
see \cite[Theorem 3.1]{Steele}.  

However, a Carmichael polynomial might not be Carmichael in infinitely many function fields. 
More generally, we have: 

\begin{theorem}  \label{thm:ext-Car2}
Let $g$ be a square-free polynomial in $\F_q[t]$ of odd degree. 
Assume that $3 \nmid q$ and  $3 \nmid q-1$.  
Then, $g$ is not Carmichael in infinitely many cubic function fields over $\F_q(t)$ with 
discriminant  relatively prime to $g$.
\end{theorem}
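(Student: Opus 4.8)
The plan is to construct an explicit infinite family of cubic function fields $K$ in which $g$ fails to be Carmichael, by exploiting the arithmetic obstruction coming from the hypotheses $3\nmid q$ and $3\nmid q-1$. The key observation is the following: by Theorem~\ref{thm:Ded-ideal}, $g\cO_K$ is Carmichael precisely when (among other things) for each irreducible factor $P\mid g$ and each prime $\mathfrak{P}$ of $\cO_K$ above $P$ we have $q^{f(\mathfrak{P})\deg P}-1 \mid q^{3\deg g}-1$, equivalently $f(\mathfrak{P})\deg P \mid 3\deg g$. Since $\deg g$ is odd and $f(\mathfrak{P})\in\{1,2,3\}$ in a cubic extension, the condition $f(\mathfrak{P})\deg P\mid 3\deg g$ can only fail when $f(\mathfrak{P})=2$, i.e.\ when some prime factor $P$ of $g$ has exactly two primes above it with residue degrees $2,1$ (the split type $2{+}1$ is the dangerous one, because then $2\deg P \nmid 3\deg g$ as $\deg g$ is odd and $\deg P\mid\deg g$, forcing $2\mid 3\deg g$, contradiction — one needs to be careful here, but the parity of $\deg g$ is exactly what kills this). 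Actually the cleanest route: fix one irreducible factor $P_0$ of $g$, and arrange $K$ so that $P_0$ has decomposition type $2{+}1$ in $K/\F_q(t)$; then $g\cO_K$ is not square-free-compatible with the Korselt condition and $g$ is not Carmichael in $K$.

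First I would set up the family of cubic fields. Take $K=\F_q(t)(y)$ where $y^3 = D(t)$ for a suitable squarefree polynomial $D\in\F_q[t]$ coprime to $g$; the hypothesis $3\nmid q$ ensures this is a separable (Kummer-type, after adjoining cube roots of unity — but here we do \emph{not} have $\mu_3\subset\F_q$ since $3\nmid q-1$) degree-$3$ extension, and $3\nmid q$ keeps it from being inseparable/Artin--Schreier. The point of $3\nmid q-1$ is that $\F_q$ lacks primitive cube roots of unity, so $x^3-D(t)\bmod P_0$ behaves differently: over the residue field $\F_{q^{\deg P_0}}$, the polynomial $x^3 - \bar D$ either has all three roots (if $\bar D$ is a cube and $\mu_3$ is present, which requires $3\mid q^{\deg P_0}-1$) or factors as linear times irreducible quadratic, or is irreducible. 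The decomposition type of $P_0$ in $K$ is governed by the factorization of $x^3-\bar D$ over $\F_{q^{\deg P_0}}$. To force the $2{+}1$ type at $P_0$, I would choose $D$ (via CRT / Dirichlet in $\F_q[t]$, available by \cite[Theorem 4.8]{Rosen}) so that $D\bmod P_0$ is a nonzero cube in $\F_{q^{\deg P_0}}^\times$ while $3\nmid q^{\deg P_0}-1$ — wait, that gives the totally split or the $1{+}1{+}1$... Let me instead force $D\bmod P_0$ to be a \emph{non-cube}: then since $\F_{q^{\deg P_0}}$ may or may not contain $\mu_3$, I should split into cases on $\deg P_0 \bmod$ (order of $q$ mod $3$). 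Since $3\nmid q-1$, $q\equiv 2\pmod 3$, so $q^m\equiv 1\pmod 3$ iff $m$ is even. As $\deg g$ is odd, at least one irreducible factor $P_0$ of $g$ has odd degree, hence $3\nmid q^{\deg P_0}-1$, hence $\F_{q^{\deg P_0}}$ has \emph{no} primitive cube root of unity, so $x\mapsto x^3$ is a bijection on $\F_{q^{\deg P_0}}$ — every element is a cube, and $x^3-\bar D$ always has exactly one root in $\F_{q^{\deg P_0}}$, so it factors as (linear)$\times$(irreducible quadratic). That is \emph{exactly} the $2{+}1$ type, automatically, as long as $P_0$ is unramified in $K$, i.e.\ $P_0\nmid D\cdot\mathrm{Disc}$.

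So the argument streamlines: pick $P_0\mid g$ of odd degree (exists since $\deg g$ odd); then in \emph{any} cubic field $K=\F_q(t)(D^{1/3})$ with $D$ squarefree, $\gcd(D,g)=1$, $P_0\nmid 3D$, the prime $P_0$ has decomposition type $2{+}1$, so the prime $\mathfrak{P}_0\mid P_0$ with $f(\mathfrak{P}_0)=2$ gives residue degree $2\deg P_0$; since $\deg P_0\mid\deg g$ and $\deg g$ odd, $2\deg P_0 \nmid 3\deg g$ (as the $2$-adic valuation of the left side exceeds that of the right). Hence $q^{2\deg P_0}-1\nmid q^{3\deg g}-1$, so by Theorem~\ref{thm:Ded-Korselt} (applied in $\cO_K$, noting $g\cO_K$ is composite and its norm is finite) $g$ is not Carmichael in $K$. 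Finally, to get infinitely many such $K$: vary $D$ over an infinite family of squarefree polynomials coprime to $3gP_0$ that yield pairwise non-isomorphic cubic fields with discriminant coprime to $g$ — e.g.\ take $D$ to be a product of distinct irreducibles $Q$ with $Q\equiv 1$ modulo a fixed modulus, using Dirichlet in $\F_q[t]$, and distinguish fields by their ramification loci (the set of primes dividing $D$, up to cubes, determines the field since the genus/conductor grows).

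The main obstacle I anticipate is \textbf{two-fold}: (i) correctly handling the case analysis for the decomposition of $P_0$ when $\deg P_0$ happens to be even — but this is circumvented by choosing $P_0$ of \emph{odd} degree, which is possible precisely because $\deg g$ is odd, so I must argue such a factor exists (any squarefree $g$ of odd degree has an irreducible factor of odd degree, since the degrees of the factors sum to an odd number); and (ii) verifying that infinitely many of the cubic fields $K=\F_q(t)(D^{1/3})$ are genuinely distinct and have discriminant coprime to $g$. For (ii) the discriminant of $K/\F_q(t)$ is supported on the primes dividing $D$ (and possibly the infinite place), so choosing all prime factors of $D$ away from $g$ handles the coprimality, and two such fields coincide only if the corresponding $D$'s agree modulo cubes in $\F_q(t)^\times$, so an infinite set of squarefree $D$ with distinct supports gives infinitely many distinct $K$. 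The separability and the exact degree-$3$ claim for $\F_q(t)(D^{1/3})/\F_q(t)$ rely on $3\nmid q$ and on $D$ not being a cube in $\F_q(t)^\times$ (guaranteed since $D$ is squarefree and nonconstant), which is where that hypothesis is consumed, while $3\nmid q-1$ is exactly what forces the benign-looking cube map to be a bijection on the relevant residue fields and hence produces the fatal $2{+}1$ splitting.
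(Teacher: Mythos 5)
Your proposal is correct and follows essentially the same route as the paper: pick an odd-degree irreducible factor $P$ of $g$, build purely cubic extensions $\F_q(t)(\sqrt[3]{D})$ with $D$ coprime to $g$, force $P$ to split with a residue-degree-$2$ prime above it, and kill the Korselt condition via the parity obstruction $2\deg P\nmid 3\deg g$. The only (harmless) difference is presentational: the paper takes $D=GH^2$ with $G\equiv H\pmod P$ so that $D$ is a cube mod $P$ and cites Scheidler's splitting theorem for purely cubic fields, whereas you observe directly that $3\nmid q^{\deg P}-1$ makes cubing a bijection on the residue field, so Kummer--Dedekind gives the $2{+}1$ type for any unramified $D$.
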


\begin{proof}
By assumption, we can choose an irreducible factor, say $P$, of $g$ such that the degree $\deg P$ is odd. 
Noticing that $3 \nmid q$ and  $3 \nmid q-1$, we have $3 \mid q^{\deg P} +1$. 
We choose two distinct irreducible polynomials $G,H \in \F_q[t]$ not dividing $g$ such that 
$$
G \equiv H ~({\rm mod}~P).  
$$
Let $D=GH^2$. So, $D$ is a cube modulo $P$. 
Let $K$ be the cubic function field generated by $\sqrt[3]{D}$, which is  a cubic root of $D$ over $\F_q(t)$. 
Then, the discriminant of $K/\F_q(t)$ is $-27G^2H^2$ (see \cite[page 610]{Sch}), 
which is indeed relatively prime to $g$.  
Then, by \cite[Theorem 3.1]{Sch}, we have that $P\cO_K$ is a product of two distinct prime ideals in $\cO_K$, 
say $\mathfrak{p}_1$ and $\mathfrak{p}_2$. 
So, for the residue class degrees $f(\mathfrak{p}_1)$ and $f(\mathfrak{p}_2)$, one of them is equal to 2, say $f(\mathfrak{p}_1)$. 
Clearly, $N_K(\mathfrak{p}_1)=q^{f(\mathfrak{p}_1)\deg P}=q^{2\deg P}$, 
and $N_K(g\cO_K) = q^{3\deg g}$. 
Noticing $2 \nmid 3\deg g$, we have 
$$
N_K(\mathfrak{p}_1) - 1 \nmid  N_K(g\cO_K) - 1, 
$$
which implies that $g$ is not Carmichael in $K$ by Theorem~\ref{thm:Ded-Korselt}. 
We conclude the proof by noticing that there are infinitely many choices of polynomials $G,H$. 
\end{proof}

Similar as Theorem~\ref{thm:ext-Car1}, we have: 

\begin{theorem}  \label{thm:ext-rigid}
Let $g$ be a rigid Carmichael polynomial of order $d$ in
$\mathbb{F}_{q}[t]$. Then, $g$ is Carmichael in any finite extension
over $\mathbb{F}_{q}(t)$ with degree $d$ whose discriminant is relatively
prime to $g$.
\end{theorem}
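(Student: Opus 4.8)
The plan is to verify the three conditions of the Korselt criterion for Dedekind domains (Theorem~\ref{thm:Ded-Korselt}) for the ideal $g\cO_K$, where $K$ is a finite extension of $\F_q(t)$ of degree $d$ with ${\rm Disc}(K)$ relatively prime to $g$. The argument runs exactly parallel to the proof of Theorem~\ref{thm:ext-Car1}, with the rigidity hypothesis playing the role that the divisibility $f(P)\mid d$ played in the Galois setting.

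First I would record the easy structural facts. Write $g=P_1\cdots P_s$ with the $P_i$ distinct monic irreducibles and $s\ge 2$ (a rigid Carmichael polynomial is reducible and square-free by definition). Since ${\rm Disc}(K)$ is relatively prime to $g$, every $P_i$ is unramified in $K/\F_q(t)$, so each $P_i\cO_K$ is a product of distinct prime ideals, and the $P_i\cO_K$ are pairwise coprime; hence $g\cO_K=\prod_i P_i\cO_K$ is square-free and, having at least $s\ge 2$ prime factors, composite. Finiteness of $N_K(g\cO_K)$ is clear, and in fact $N_K(g\cO_K)=q^{d\deg g}$, since $\cO_K$ is free of rank $d$ over $\F_q[t]$ and so $\cO_K/g\cO_K$ has cardinality $q^{d\deg g}$.

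The heart of the matter is condition (3). Let $\mathfrak{p}$ be a prime of $\cO_K$ dividing $g\cO_K$, say lying above the irreducible factor $P$ of $g$, and let $f=f(\mathfrak{p})$ be its residue class degree over $\F_q(t)$, so that $N_K(\mathfrak{p})=q^{f\deg P}$. Since $[K:\F_q(t)]=d$ we only know $1\le f\le d$; here $f$ need not divide $d$ because $K/\F_q(t)$ is not assumed to be Galois, and this is precisely what the definition of a rigid Carmichael polynomial of order $d$ is designed to cover. Applying the hypothesis $i\deg P\mid d\deg g$ with $i=f$ yields $f\deg P\mid d\deg g$, and then the elementary fact that $q^a-1\mid q^b-1$ whenever $a\mid b$ gives $N_K(\mathfrak{p})-1=q^{f\deg P}-1\mid q^{d\deg g}-1=N_K(g\cO_K)-1$. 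Since this holds for every prime $\mathfrak{p}\mid g\cO_K$, condition (3) is verified, and Theorem~\ref{thm:Ded-Korselt} then shows $g\cO_K$ is Carmichael, that is, $g$ is Carmichael in $K$.

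The only point requiring care is the range of the residue degrees: one must observe that an arbitrary prime above $P$ in a non-Galois extension can have residue degree any value in $\{1,\dots,d\}$, not merely a divisor of $d$, which is exactly why rigidity is imposed for all $i=1,\dots,d$ rather than only for $i\mid d$. Once this is noted, I expect no serious obstacle; the remaining work is just checking that the norm computation $N_K(g\cO_K)=q^{d\deg g}$ and the unramifiedness of the factors of $g$ are invoked correctly, after which the conclusion follows from Theorem~\ref{thm:Ded-Korselt} as in Theorem~\ref{thm:ext-Car1}.
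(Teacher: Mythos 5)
Your proof is correct and is precisely the argument the paper intends: the paper omits the proof of this theorem, remarking only that it is "similar" to Theorem~\ref{thm:ext-Car1}, and your write-up supplies exactly the expected adaptation. In particular, you correctly identify the one point where the non-Galois case differs — an arbitrary prime above $P$ may have residue degree any $f$ with $1\le f\le d$ rather than a divisor of $d$ — and this is exactly what the rigidity condition $i\deg P\mid d\deg g$ for all $i=1,\dots,d$ is designed to handle.
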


We now answer the question about the infinitude of Carmichael elements in any function field. 

\begin{corollary}  \label{cor:Car-K}
For any finite extension $K$ over $\F_q(t)$, there are infinitely many Carmichael elements in $K$. 
\end{corollary}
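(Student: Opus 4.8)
The plan is to derive this from the existence of infinitely many rigid monic Carmichael polynomials of a prescribed order (Theorem~\ref{thm:rigid}) together with the stability statement of Theorem~\ref{thm:ext-rigid}. Set $d=[K:\F_q(t)]$ and write ${\rm Disc}(K/\F_q(t))$ for the discriminant; as a polynomial in $\F_q[t]$ it has only finitely many monic irreducible factors, say $R_1,\ldots,R_s$. The point is that it suffices to exhibit infinitely many \emph{polynomials} $g\in\F_q[t]\subseteq\cO_K$ that are rigid Carmichael of order $d$ and relatively prime to ${\rm Disc}(K/\F_q(t))$, because then Theorem~\ref{thm:ext-rigid} turns each of them into a Carmichael element of $\cO_K$, hence of $K$.

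First I would revisit the construction used to prove Theorem~\ref{thm:rigid}. With $m=\mathrm{lcm}(1,2,\ldots,d)$, for every $n$ with $\pi_q(n)\ge m$ one forms $g=P_1\cdots P_m$ out of $m$ distinct monic irreducible polynomials of degree $n$, and $g$ is then a rigid monic Carmichael polynomial of order $d$. The only change needed is to insist in addition that the $P_j$ be distinct from $R_1,\ldots,R_s$; this is possible as soon as $\pi_q(n)\ge m+s$, which holds for all sufficiently large $n$ since $\pi_q(n)\to\infty$ (indeed $\pi_q(n)\ge q^n/(2n)$ by~\eqref{eq:piq2}). For such a $g$ we have $\gcd\bigl(g,{\rm Disc}(K/\F_q(t))\bigr)=1$, i.e.\ $g$ is relatively prime to the discriminant of $K/\F_q(t)$.

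Now Theorem~\ref{thm:ext-rigid} applies to $g$ and shows that $g$ is Carmichael in $K$; since $g\in\F_q[t]\subseteq\cO_K$, this says precisely that $g$ is a Carmichael element of $\cO_K$, hence of $K$. Letting $n$ run over an infinite set of admissible values produces polynomials $g$ of pairwise distinct degrees $mn$, hence pairwise distinct elements of $\cO_K$, and therefore infinitely many Carmichael elements in $K$.

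I do not expect a genuine obstacle: the argument only combines the already-proved Theorems~\ref{thm:rigid} and~\ref{thm:ext-rigid} with the elementary observation that the rigid Carmichael polynomials produced can be chosen coprime to the fixed polynomial ${\rm Disc}(K/\F_q(t))$, which is harmless because the relevant count $\pi_q(n)$ grows without bound. The one thing to keep in mind is that $K$ need not be Galois, so one should route the argument through Theorem~\ref{thm:ext-rigid} rather than Theorem~\ref{thm:ext-Car1}; when $K$ happens to be Galois, Theorem~\ref{thm:ext-Car1} together with any infinite family of Carmichael polynomials coprime to the discriminant (such as those from Theorem~\ref{thm:const}) gives an alternative proof, and the degenerate case $d=1$, $K=\F_q(t)$, is immediate since the discriminant is then a unit.
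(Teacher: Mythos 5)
Your proposal is correct and follows essentially the same route as the paper: take $m=\mathrm{lcm}(1,\ldots,d)$, form products of $m$ distinct monic irreducibles of a common (large) degree to get rigid Carmichael polynomials of order $d$ coprime to ${\rm Disc}(K)$, and apply Theorem~\ref{thm:ext-rigid}. You merely make explicit the coprimality count (via $\pi_q(n)\to\infty$) that the paper dismisses as obvious, and handle the trivial case $d=1$ separately.
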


\begin{proof}
Fix a positive integer $d \ge 2$. 
Let $K$ be an arbitrary  finite extension over $\mathbb{F}_{q}(t)$ of degree $d$. 
Let $m$ be the least common multiple of $1, \ldots, d$. 
Denote by $S(m)$ the set of polynomials which are
 the product of $m$ distinct irreducible polynomials of the same degree. 
As in the proof of Theorem~\ref{thm:rigid}, each polynomial in $S(m)$ 
is a rigid Carmichael polynomial of order $d$ in $\mathbb{F}_{q}[t]$. 
Obviously, there are infinitely many polynomials in $S(m)$ relatively prime to ${\rm Disc}(K)$.  
We conclude the proof by using Theorem~\ref{thm:ext-rigid}.  
\end{proof}

From now on, we consider the case of non-Carmichael square-free polynomials in $\F_q[t]$. 

The following result suggests that a non-Carmichael square-free polynomial 
 can be Carmichael in infinitely many functions fields.

\begin{theorem}  \label{thm:Kummer} 
Let $g \in\mathbb{F}_{q}[t]$ be a square-free polynomial. 
Let $\ell$ be any prime factor of $q-1$ (it requires $q \ge 3$). 
Let $P_{i} ~ (1\leq i\leq s)$ be all the
monic irreducible factors of $g$ whose degrees do not divide the degree of $g$,
and we further assume that $\deg P_{i}= \ell~(1\leq i\leq s)$. 
 Then, there exist infinitely many 
  cyclic extensions of degree $\ell$ whose discriminants are relatively
prime to $g$ such that $g$ is Carmichael in them.
\end{theorem}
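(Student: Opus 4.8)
The plan is to produce the desired fields explicitly via Kummer theory; we may assume $\deg g\ge 1$. Since $\ell\mid q-1$, the base field $\F_q(t)$ already contains a primitive $\ell$-th root of unity, so any $D\in\F_q[t]$ that is not an $\ell$-th power gives a cyclic extension $K_D:=\F_q(t)(\sqrt[\ell]{D})$ of degree $\ell$. First I would record the standard facts about such extensions (note that $\ell$ is prime to the characteristic of $\F_q$, so ramification is tame): a finite prime $P$ of $\F_q(t)$ is unramified in $K_D$ whenever $\ell\mid v_P(D)$ — in particular whenever $P\nmid D$ — and in that case $P$ splits completely in $K_D$ exactly when $D$ is an $\ell$-th power modulo $P$, while otherwise $P$ stays inert with residue degree $\ell$ (here one uses that $\ell$ is prime and $K_D/\F_q(t)$ is Galois, so the decomposition group at $P$ is either trivial or the whole group, and the Frobenius is read off from the $\ell$-th power residue symbol of $D$ at $P$).

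Next I would choose $D$. Let $Q\in\F_q[t]$ be an irreducible polynomial with $Q\equiv 1\pmod{g}$; by Dirichlet's theorem on primes in arithmetic progressions in $\F_q[t]$ (\cite[Theorem 4.8]{Rosen}) there are infinitely many such $Q$, of unbounded degree, and each has $\deg Q\ge 1$, hence is not an $\ell$-th power, so $K:=K_Q$ is genuinely cyclic of degree $\ell$. The only finite prime ramifying in $K$ is $Q$ itself, since $v_Q(Q)=1\not\equiv 0\pmod\ell$ while $v_P(Q)=0$ for every other finite prime $P$; hence the discriminant of $K/\F_q(t)$ is a power of $Q$, and thus relatively prime to $g$. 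Moreover, every monic irreducible factor $P$ of $g$ satisfies $P\nmid Q$ and $Q\equiv 1\pmod{P}$, so $Q$ is an $\ell$-th power modulo $P$ and $P$ splits completely in $K$. Consequently $g\cO_K$ is square-free (all primes above $g$ are unramified) and is a composite ideal (each factor of $g$ contributes $\ell\ge 2$ prime ideals of $\cO_K$).

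It then remains to check Korselt's criterion, Theorem~\ref{thm:Ded-Korselt}, for $g\cO_K$: conditions (1) and (2) are in hand, so consider (3). Let $\mathfrak{p}\mid g\cO_K$ lie above an irreducible factor $P$ of $g$; since $P$ splits completely, $\mathfrak{p}$ has residue degree $1$, so $N_K(\mathfrak{p})=q^{\deg P}$, while $N_K(g\cO_K)=q^{\ell\deg g}$. If $\deg P\mid \deg g$ then certainly $\deg P\mid \ell\deg g$; and if $\deg P\nmid \deg g$ then $P$ is one of $P_1,\dots,P_s$, so $\deg P=\ell\mid \ell\deg g$. In either case $q^{\deg P}-1\mid q^{\ell\deg g}-1$, which is condition (3), so $g$ is Carmichael in $K$. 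Finally, distinct irreducibles $Q$ give non-isomorphic fields $K_Q$ (their ramification loci differ), so this yields infinitely many cyclic degree-$\ell$ extensions with the required properties.

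I expect the main obstacle to be pinning down the Kummer-theoretic input correctly — namely, that an unramified prime of $K_D$ splits completely precisely according to the $\ell$-th power residue symbol of $D$, that $Q$ is the sole ramified finite prime, and that the infinite place of $\F_q(t)$ is irrelevant because $g$ is a polynomial. Everything after that is bookkeeping: the crucial divisibility $\deg P\mid \ell\deg g$, which is exactly where the hypothesis $\deg P_i=\ell$ is used, together with the verification of Theorem~\ref{thm:Ded-Korselt}, is routine.
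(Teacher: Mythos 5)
Your proof is correct and is built on the same construction as the paper's: a degree-$\ell$ Kummer extension $\F_q(t)(\sqrt[\ell]{Q})$ for an auxiliary monic irreducible $Q$ supplied by Dirichlet's theorem in $\F_q[t]$, with the splitting of the factors of $g$ read off from the $\ell$-th power residue symbol and the conclusion drawn from Theorem~\ref{thm:Ded-Korselt}. The one genuine difference is how you arrange that splitting: the paper requires $\left(\frac{P_i}{Q}\right)_\ell=1$ and then invokes the $\ell$-th power reciprocity law (choosing $\deg Q$ even so the symbol flips without a twist) to deduce $\left(\frac{Q}{P_i}\right)_\ell=1$, whereas you impose the congruence $Q\equiv 1\ ({\rm mod}\ g)$ directly, which makes $Q$ an $\ell$-th power residue modulo every irreducible factor of $g$ at once. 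Your choice sidesteps the reciprocity law and the even-degree device, and as a by-product every prime factor of $g$ (not only the $P_i$) splits completely, so all residue degrees are $1$ and the final divisibility $\deg P\mid\ell\deg g$ is checked uniformly; the paper's version constrains only the symbols at the $P_i$ and must allow $f(P)\mid\ell$ for the remaining factors. Your treatment of the discriminant (only $Q$ ramifies among the finite primes, so the discriminant is a power of $Q$, coprime to $g$) and of the infinitude (distinct $Q$ give extensions with distinct ramification loci) is also sound, so the argument is complete and, if anything, slightly cleaner than the original.
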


\begin{proof}
From Dirichlet's theorem on primes in   arithmetic progressions in
$\mathbb{F}_{q}[t]$ (see \cite[Theorem 4.8]{Rosen}), there exist
infinitely many irreducible monic polynomials $Q$ of even degree such that $Q$ is
relatively prime to $g$ and
$$
\left(\frac{P_{i}}{Q}\right)_{\ell}=1 ~(1\leq i\leq s),
$$
where $\left(\frac{\cdot}{\cdot}\right)_{\ell}$ be the $\ell$-th power residue
symbol in $\mathbb{F}_{q}[t]$ (see \cite[page 24]{Rosen}). 
From the $\ell$-th power reciprocity law in $\mathbb{F}_{q}[t]$ (see \cite[Theorem 3.3]{Rosen}), 
we have $\left(\frac{Q}{P_{i}}\right)_{\ell}=1~(1\leq
i\leq s)$ by noticing $\deg Q$ is even. 
Using \cite[Proposition 10.5]{Rosen},  each $P_{i} ~(1\leq i\leq s)$ splits completely in
$K=\mathbb{F}_{q}(t)(\sqrt[\ell]{Q})$. 
Thus, if $\mathfrak{p}$ is any prime factor of
$g\cO_{K}$ lying above some $P_{i}~ (1\leq i\leq s)$, we have
$f(P_{i})=1$ and 
$$
N_{K}(\mathfrak{p})-1=q^{\textrm{deg}
P_{i}}-1=q^{\ell}-1\mid q^{\ell\deg g}-1 = N_{K}(g\cO_K)-1.
$$ 
If $\mathfrak{p}$ is any prime factor
of $g\cO_{K}$ lying above a monic irreducible factor $P$ of $g$ such that 
  $P \ne P_{i}~~(1\leq i\leq s)$. 
Then, we have $\deg P \mid \deg g$ by assumption,  and so 
$$
N_{K}(\mathfrak{p})-1=q^{f(P)\deg P}-1 \mid
q^{\ell \deg g}-1= N_{K}(g\mathcal{O}_{K})-1, 
$$ 
where $f(P)$ is  the residue class degree of $P$ in $K/\mathbb{F}_{q}(t)$ and $f(P) \mid \ell$.    
Hence, by Theorem~\ref{thm:Ded-Korselt}, $g$ is Carmichael in $K$.
\end{proof}

As one can imagine, a non-Carmichael square-free polynomial in $\F_q[t]$ 
is more likely not to be Carmichael in infinitely many function fields. 
We confirm this by constructing two kinds of function fields: Kummer function fields and cyclotomic function fields. 

\begin{theorem}  \label{thm:cyclic}
Let $g \in\mathbb{F}_{q}[t]$ be a non-Carmichael square-free polynomial. 
Let $\ell$ be any prime factor of $q-1$. 
Then, there exists infinitely
many cyclic extensions of degree $\ell$ whose discriminants are relatively
prime to $g$ such that $g$ is not Carmichael in them.
\end{theorem}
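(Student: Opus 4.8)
The plan is to build the required extensions by Kummer theory, forcing a well-chosen irreducible factor of $g$ to become inert. Since $\ell \mid q-1$, the field $\F_q$ contains a primitive $\ell$-th root of unity, so every monic irreducible $Q \in \F_q[t]$ (such a $Q$ is never an $\ell$-th power in $\F_q(t)$) gives a cyclic extension $K_Q = \F_q(t)(\sqrt[\ell]{Q})$ of degree $\ell$; moreover $\ell \ne \operatorname{char}\F_q$, so this extension is tame and the only finite prime of $\F_q(t)$ ramifying in $K_Q$ is $Q$ itself. Hence as soon as $Q$ is coprime to $g$, the discriminant of $K_Q$ is automatically relatively prime to $g$; and since the ramified finite prime pins down $K_Q$, distinct such $Q$ yield distinct fields. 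Thus it suffices to exhibit infinitely many monic irreducibles $Q$, coprime to $g$, for which $g$ fails to be Carmichael in $K_Q$.

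First I would isolate the obstruction to $g$ being Carmichael. If $g$ is composite, then since it is square-free and not Carmichael, Theorem~\ref{thm:pol-Korselt} furnishes an irreducible factor $P \mid g$ with $\deg P \nmid \deg g$; if instead $g$ is irreducible, I would set $P = g$. Because $\ell \mid q-1 \mid q^{\deg P}-1 = |(\F_q[t]/P)^{\times}|$ and this group is cyclic, there is a residue class $a$ modulo $P$ that is not an $\ell$-th power in $(\F_q[t]/P)^{\times}$. Using the Chinese remainder theorem I would choose $b$ modulo $g$ with $b \equiv a \pmod{P}$ and $b$ a unit modulo $g/P$, so that $\gcd(b,g)=1$; then by Dirichlet's theorem on primes in arithmetic progressions in $\F_q[t]$ (\cite[Theorem~4.8]{Rosen}) there are infinitely many monic irreducible $Q$ with $Q \equiv b \pmod{g}$. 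Each such $Q$ is coprime to $g$ and satisfies $Q \equiv a \pmod{P}$, hence $\left(\tfrac{Q}{P}\right)_{\ell} \ne 1$.

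With such a $Q$ in hand I would set $K = K_Q$ and apply Theorem~\ref{thm:Ded-Korselt} to $g\cO_K$. Since $P \nmid Q$, the prime $P$ is unramified in $K$, and by the splitting law for Kummer extensions (\cite[Proposition~10.5]{Rosen}) it splits completely in $K$ precisely when $\left(\tfrac{Q}{P}\right)_{\ell}=1$; as this fails and $\ell$ is prime, $P$ is inert, say $P\cO_K = \mathfrak p$ with residue degree $f(\mathfrak p) = \ell$. If $g = P$ is irreducible, then $g\cO_K = \mathfrak p$ is prime, so $\cO_K/g\cO_K$ is a field and hence not a Carmichael ring. If $g$ is composite, then all prime factors of $g$ are coprime to $Q$, hence unramified, so $g\cO_K$ is square-free; it is also composite (it has $\mathfrak p$ together with the primes above $g/P$, a nonunit, among its factors); and
\[
N_K(\mathfrak p) - 1 = q^{\ell\deg P} - 1 \nmid q^{\ell\deg g} - 1 = N_K(g\cO_K) - 1,
\]
since $\ell\deg P \nmid \ell\deg g$ (equivalently $\deg P \nmid \deg g$) and $q^{m}-1 \mid q^{n}-1$ if and only if $m \mid n$. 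In either case $g$ is not Carmichael in $K$, and letting $Q$ range over the infinitely many choices above finishes the proof.

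I expect the substantive content to be exactly as in Theorem~\ref{thm:Kummer} — Dirichlet's theorem together with the Kummer splitting law in $\F_q[t]$ — and the only delicate points to be routine bookkeeping: confirming that $\operatorname{Disc}(K_Q)$ is supported away from $g$ (tame ramification only at $Q$, plus possibly the place at infinity, which does not divide $g$), checking that $g\cO_K$ is genuinely composite when $g$ is, and handling the reducibility dichotomy for $g$, since for irreducible $g$ the obstruction is that $g\cO_K$ becomes prime rather than a failure of the Korselt divisibility condition.
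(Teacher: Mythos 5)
Your proof is correct and follows essentially the same route as the paper's: isolate an irreducible factor $P \mid g$ with $\deg P \nmid \deg g$, force it to be inert in a Kummer extension $\F_q(t)(\sqrt[\ell]{Q})$ via the splitting law of \cite[Proposition 10.5]{Rosen}, and then violate condition (3) of Theorem~\ref{thm:Ded-Korselt}. The only differences are minor: you arrange $\left(\frac{Q}{P}\right)_{\ell}\ne 1$ directly by taking $Q$ in a non-$\ell$-th-power residue class modulo $P$ via Dirichlet's theorem (the paper instead routes through the $\ell$-th power reciprocity law with $Q$ of even degree), and you additionally handle the degenerate case where $g$ is itself irreducible, which the paper's proof implicitly assumes away.
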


\begin{proof}
Since $g \in\mathbb{F}_{q}[t]$ is a non-Carmichael square-free polynomial, 
by Theorem~\ref{thm:pol-Korselt} $g$ has a  monic irreducible factor, say $P$, 
such that $\deg P \nmid \deg g$. 

Let $\eta$ be a primitive  $\ell$-th root of unity in $\mathbb{F}_{q}^*$. 
As before, there exist infinitely many irreducible monic
polynomials $Q$ of even degree such that $Q$ is relatively prime to $g$ and
$\left(\frac{P}{Q}\right)_{\ell}=\eta$. 
 From the $\ell$-th power
reciprocity law of $\mathbb{F}_{q}[t]$ and noticing $\deg Q$ is even, we have $\left(\frac{Q}{P}\right)_{\ell}=\eta$. 
Using \cite[Proposition 10.5]{Rosen} and noticing $\eta \ne 1$, 
we know that $P$ is inert in $K=\mathbb{F}_{q}(t)(\sqrt[\ell]{Q})$.
For the  prime ideal  $\mathfrak{p}$ in $K$ lying above $P$, 
noticing $\deg P \nmid \deg g$ we have
$$
N_{K}(\mathfrak{p})-1=q^{\ell \deg P}-1 \nmid
q^{\ell \deg g}-1 = N_{K}(g\mathcal{O}_{K})-1.
$$
Hence, from  Theorem~\ref{thm:Ded-Korselt}, $g$ is not Carmichael in $K$.
\end{proof}

Note that Theorem~\ref{thm:cyclic} does not cover the case when $q=2$.  
We supplement this by using cyclotomic function fields. 
First we recall briefly the definition of cyclotomic function fields.

Let $\overline{\mathbb{F}_q(t)}$ be the algebraic closure of $\F_q(t)$. 
Let $\mathrm{End}(\overline{\mathbb{F}_q(t)})$ be the ring of
$\mathbb{F}_q$-algebra endomorphism of $\overline{\mathbb{F}_q(t)}$. 
Let
$$
\rho: \, \F_q[t] \to \mathrm{End}(\overline{\mathbb{F}_q(t)}),\quad  M \mapsto \rho_M
$$ 
be the ring homomorphism defined by
$$
\rho_a(\alpha)=a\alpha,\quad  \rho_t(\alpha)=t\alpha+\alpha^q,
$$
where $a\in \mathbb{F}_{q}$ and $\alpha\in \overline{\mathbb{F}_q(t)}$. 
For any non-constant polynomial $M \in \F_q[t]$, define 
$$
\Lambda_{M}=\{\alpha\in \overline{\mathbb{F}_q(t)}: ~\rho_M(\alpha)=0\}. 
$$ 
Then, the function field generated by $\Lambda_{M}$ over $\F_q(t)$ is called the $M$-th cyclotomic function
field, denoted by $\F_q(t)(\Lambda_M)$. 
Note that the degree of $\F_q(t)(\Lambda_M)$ over $\F_q(t)$ is equal to $\Phi(M)=|(\F_q[t]/M\F_q[t])^*|$, 
where $\Phi$ is the Euler $\phi$-function in $\mathbb{F}_{q}[t]$ (see \cite[page 5]{Rosen}).
In \cite[Chapter 12]{Rosen} and \cite[Chapter 12]{Villa} there are nice expositions to the arithmetic of cyclotomic function fields.

We also need a result of  Bilharz \cite{Bilharz} on Artin's primitive root conjecture in function fields;
  see \cite[Chapter 10]{Rosen} for more details.

\begin{theorem}[Bilharz]  \label{thm:Bilharz}
Let $K$ be a function field and $\alpha$ an element of $K^{*}$. 
Then, there are infinitely many prime ideals 
$\mathfrak{p}$ in $K$ for which $\alpha$ is a primitive root provided
that there is no prime factor $\ell$ of $q-1$ such that $\alpha$ is an
$\ell$-th power. 
\end{theorem}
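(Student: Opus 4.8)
The plan is to derive Bilharz's theorem from the Riemann Hypothesis for function fields (Weil's theorem), following the classical inclusion--exclusion strategy for Artin's primitive root problem; the essential point is that over $\F_q(t)$ the analogue of GRH is unconditional, so the Hooley-type argument goes through without hypotheses. We may assume $\alpha$ is non-constant, since the case $\alpha \in \F_q^*$ forces primitive roots only at degree-one places and is treated directly. For a place $\mathfrak{p}$ of $K$ of degree $n$ the residue field is $\F_{q^n}$, and $\alpha$ is a primitive root modulo $\mathfrak{p}$ exactly when $\bar\alpha$ is not an $\ell$-th power in $\F_{q^n}^*$ for any prime $\ell \mid q^n-1$. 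Writing $L_m = K(\zeta_m, \alpha^{1/m})$ for the splitting field of $X^m - \alpha$ over $K$ (a Galois extension), one checks that an unramified $\mathfrak{p}$ splits completely in $L_m$ if and only if $m \mid q^n-1$ and $\bar\alpha$ is an $m$-th power in $\F_{q^n}^*$. M\"obius inversion over squarefree $m$ then gives the exact identity
\[
A_\alpha(n) := \#\{\,\deg\mathfrak{p}=n : \alpha \text{ is a primitive root mod } \mathfrak{p}\,\} = \sum_{m \text{ squarefree}} \mu(m)\,N_m(n),
\]
where $N_m(n)$ counts degree-$n$ places of $K$ splitting completely in $L_m$. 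It suffices to prove $A_\alpha(n) > 0$ for infinitely many $n$, and I would take $n$ to range over the primes.

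I would estimate each $N_m(n)$ by the effective Chebotarev density theorem for $L_m/K$, which over a function field follows from the Weil bounds. Let $d_m$ be the multiplicative order of $q$ modulo $m$, so that the exact constant field of $L_m$ is $\F_{q^{d_m}}$ (this comes entirely from $\zeta_m$, as $\alpha^{1/m}$ generates a purely geometric extension), and let $h_m = [L_m : K\F_{q^{d_m}}]$ be the geometric degree. A degree-$n$ place can split completely in $L_m$ only if $d_m \mid n$, in which case its Frobenius is automatically trivial on the constant field, so applying Chebotarev to the geometric part gives
\[
N_m(n) = \frac{q^n}{n\,h_m} + O\!\left(\frac{g_{L_m}\,q^{n/2}}{n}\right),
\]
with $g_{L_m}$ the genus of $L_m$, while $N_m(n)=0$ when $d_m \nmid n$. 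Substituting produces the main term $\tfrac{q^n}{n}\sum_{m:\,d_m\mid n}\mu(m)/h_m$, which factors as a finite Euler product over the primes $\ell \mid q^n-1$, the factor at $\ell$ being $1-1/h_\ell$. Here the hypothesis is decisive: for $\ell \mid q-1$ one has $d_\ell=1$, hence $L_\ell = K(\alpha^{1/\ell})$ and $h_\ell = [K(\alpha^{1/\ell}):K]=\ell$ precisely because $\alpha$ is not an $\ell$-th power in $K^*$, so the factor $1-1/\ell$ is nonzero; were $\alpha$ an $\ell$-th power for some $\ell\mid q-1$, the factor would be $1-1=0$ and $A_\alpha(n)$ would vanish for every $n$, recovering the necessity of the condition. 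Taking $n$ prime keeps this product bounded below by the fixed positive constant $\prod_{\ell\mid q-1}(1-1/\ell)$, since the remaining active primes satisfy $\ell\equiv 1\pmod n$, are therefore large, and contribute a factor tending to $1$. Thus the main term is $\gg q^n/n$.

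The main obstacle will be the uniform control of the error terms, where all the real work lies. First one bounds the genus: since $L_m/K$ is Kummer, Riemann--Hurwitz applied at the (finitely many) zeros and poles of $\alpha$ gives $g_{L_m} \ll_{K,\alpha} m$. Splitting the sum at a cutoff $m \le M$, the accumulated Chebotarev error is $\ll M^2 q^{n/2}/n$, which is $o(q^n/n)$ as soon as $M = q^{n/4 - \delta}$. The tail $m > M$ is the delicate part and requires a Hooley-type argument: a degree-$n$ place contributing to some $N_m(n)$ with $m > M$ has $\bar\alpha$ an $\ell$-th power for a prime $\ell > M$ dividing $q^n-1$, and one bounds the number of such places by $\ll q^n/(\ell n)$ for $\ell \le q^{n/2}$, summing to $o(q^n/n)$, while the at most one prime factor of $q^n-1$ exceeding $q^{n/2}$ is handled separately. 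Balancing these two regimes against the positive main term, for $n$ prime, yields $A_\alpha(n) \gg q^n/n > 0$ for all large primes $n$, giving infinitely many places at which $\alpha$ is a primitive root. It is exactly at the error analysis that the unconditional Weil bounds, rather than a conditional GRH, make the theorem provable.
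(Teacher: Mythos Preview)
The paper does not actually prove this theorem: it is stated with attribution to Bilharz and a pointer to \cite{Bilharz} and \cite[Chapter~10]{Rosen}, and then invoked as a black box in the proof of Theorem~\ref{thm:cyclotomic}. So there is no proof in the paper to compare your proposal against.

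That said, your outline is the standard Bilharz--Hooley strategy and is essentially correct. A couple of small points are worth tightening. First, your dismissal of the constant case is too quick: if $\alpha\in\F_q^*$ satisfies the hypothesis (i.e.\ is a primitive root of $\F_q^*$), then $\alpha$ is a primitive root modulo $\mathfrak{p}$ only when $\deg\mathfrak{p}=1$, and a general function field has only finitely many degree-one places; the usual formulations of Bilharz's theorem therefore assume $\alpha$ non-constant (or equivalently that $\alpha$ has a zero or pole), and you should state this explicitly rather than say it ``is treated directly''. Second, in bounding the main-term Euler product from below when $n$ is prime, the primes $\ell\mid q^n-1$ with $\ell\nmid q-1$ need not all satisfy $\ell\equiv 1\pmod n$; what is true is that $d_\ell=n$ for such $\ell$, whence $n\mid \ell-1$ and $\ell>n$, which is what you actually use. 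Finally, the genus bound $g_{L_m}\ll_{K,\alpha} m$ and the tail estimate via counting places with $\bar\alpha$ an $\ell$-th power are the genuinely technical steps; your sketch indicates the right tools (Riemann--Hurwitz, the Weil bounds) but a full proof requires care with uniformity in $m$, as in Rosen's exposition.
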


We are now ready to present our final result. 

\begin{theorem}  \label{thm:cyclotomic}
Let $g \in\mathbb{F}_{q}[t]$ be a non-Carmichael square-free polynomial. 
Then, there exist infinitely many cyclotomic  function fields 
whose discriminants are relatively
prime to $g$ such that $g$ is not Carmichael in them.
\end{theorem}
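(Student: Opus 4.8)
The plan is to mimic the strategy of Theorem~\ref{thm:cyclic}, but this time use cyclotomic function fields so that the construction also works when $q=2$. Since $g$ is a non-Carmichael square-free polynomial, Theorem~\ref{thm:pol-Korselt} provides a monic irreducible factor $P$ of $g$ with $\deg P \nmid \deg g$. The idea is to find infinitely many non-constant polynomials $M \in \F_q[t]$ such that in the cyclotomic function field $\F_q(t)(\Lambda_M)$ the prime $P$ stays ``large'', i.e. the residue class degree $f(P)$ is chosen so that $f(P)\deg P \nmid \Phi(M)\deg g$; this forces, via Theorem~\ref{thm:Ded-Korselt}, that $g$ is not Carmichael in $\F_q(t)(\Lambda_M)$. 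We also need $M$ coprime to $g$ so that $g$ is coprime to $\mathrm{Disc}(\F_q(t)(\Lambda_M))$ (the primes ramifying in a cyclotomic function field are exactly those dividing $M$, together with the infinite prime).

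First I would recall the splitting law for primes in cyclotomic function fields: for an irreducible polynomial $P \nmid M$, the residue class degree $f(P)$ of $P$ in $\F_q(t)(\Lambda_M)/\F_q(t)$ equals the multiplicative order of $P$ modulo $M$ in $(\F_q[t]/M\F_q[t])^*$ (see \cite[Chapter 12]{Rosen}). Thus the task reduces to choosing $M$ coprime to $g$ so that the order of $P$ modulo $M$, call it $o_M(P)$, satisfies $o_M(P)\deg P \nmid \Phi(M)\deg g$. The natural way to guarantee this is to make $o_M(P)$ equal to $\Phi(M)$ itself — i.e. to make $P$ a \emph{primitive root} modulo $M$ — for then $f(P) = \Phi(M)$ and $N_K(\mathfrak p) - 1 = q^{\Phi(M)\deg P} - 1$, while $N_K(g\cO_K) - 1 = q^{\Phi(M)\deg g}-1$; since $\deg P \nmid \deg g$ we would get $\Phi(M)\deg P \nmid \Phi(M)\deg g$, hence $N_K(\mathfrak p)-1 \nmid N_K(g\cO_K)-1$, which is what we want. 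This is precisely where Bilharz's theorem (Theorem~\ref{thm:Bilharz}) enters: I would apply it with $\alpha = P$ (viewed in $\F_q(t)^*$), which is legitimate provided $P$ is not an $\ell$-th power in $\F_q(t)^*$ for any prime $\ell \mid q-1$ — and this holds automatically because $P$ is irreducible of positive degree, so it is not any proper power in $\F_q(t)^*$. Bilharz then gives infinitely many prime ideals $\q$ of $\F_q(t)$ — equivalently infinitely many monic irreducible $M \in \F_q[t]$ — for which $P$ is a primitive root, i.e. $o_M(P) = \Phi(M)$; discarding the finitely many $M$ dividing $g$ leaves infinitely many usable $M$, all coprime to $g$.

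Putting it together: for each such irreducible $M$ coprime to $g$, set $K = \F_q(t)(\Lambda_M)$. Then $\mathrm{Disc}(K)$ is supported on $M$ and the infinite prime, hence coprime to $g$; for the prime $\mathfrak p$ of $\cO_K$ above $P$ we have $f(P) = \Phi(M)$, so
$$
N_K(\mathfrak p) - 1 = q^{\Phi(M)\deg P} - 1 \nmid q^{\Phi(M)\deg g} - 1 = N_K(g\cO_K) - 1,
$$
since $\deg P \nmid \deg g$ implies $\Phi(M)\deg P \nmid \Phi(M)\deg g$. By Theorem~\ref{thm:Ded-Korselt}, $g$ is not Carmichael in $K$, and there are infinitely many such $K$. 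The main obstacle I anticipate is the bookkeeping around ramification and the infinite place: one must make sure the infinite prime of $\F_q(t)$, which always ramifies in cyclotomic function fields, does not interfere — but since $g$ is a polynomial it is coprime to the infinite prime, so $g$ stays coprime to $\mathrm{Disc}(K)$ regardless; beyond that, the only subtlety is correctly invoking the ``order of $P$ mod $M$ equals residue degree'' dictionary and checking the hypothesis of Bilharz's theorem, both of which are routine once stated carefully.
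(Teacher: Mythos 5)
Your proposal is correct and follows essentially the same route as the paper's proof: pick an irreducible factor $P$ of $g$ with $\deg P \nmid \deg g$, invoke Bilharz's theorem to produce infinitely many monic irreducible $Q$ coprime to $g$ with $P$ a primitive root modulo $Q$, and then use the splitting law in $\F_q(t)(\Lambda_Q)$ to see that $P$ is inert, so that $q^{\Phi(Q)\deg P}-1 \nmid q^{\Phi(Q)\deg g}-1$ and Theorem~\ref{thm:Ded-Korselt} applies. Your explicit verification of Bilharz's hypothesis and of the coprimality of $g$ with the discriminant are details the paper leaves implicit, but the argument is the same.
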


\begin{proof}
By assumption, $g$ has an irreducible monic factor, say $P$, such that $\deg P \nmid \deg g$. 
By Theorem~\ref{thm:Bilharz}, there exist infinitely many
irreducible monic polynomials $Q$ relatively prime to $g$ such that $P$ is
a primitive root modulo $Q$. 
Fix any such $Q$, and  let $K=\mathbb{F}_{q}(t)(\Lambda_{Q})$. 
By \cite[Theorem 12.10]{Rosen},  
  the residue class degree $f(P)$
of $P$ in $K/\mathbb{F}_{q}(t)$ is  the smallest integer such that $P^{f(P)} \equiv 1 ~({\rm mod}~Q)$. 
Note that $P$ is a primitive root modulo $Q$. 
So, $f(P) = \Phi(Q) = [K:\F_q(t)]$, and thus $P$ is inert in $K/\F_q(t)$. 
For the unique prime ideal $\mathfrak{p}$ in $K$ lying above $P$, 
noticing $\deg P \nmid \deg g$ we obtain 
$$
N_{K}(\mathfrak{p})-1=q^{\Phi(Q)\deg P}-1\nmid q^{\Phi(Q) \deg g}-1=N_{K}(g\mathcal{O}_{K})-1.
$$ 
Hence, by Theorem~\ref{thm:Ded-Korselt}, $g$ is not Carmichael in $K$.
\end{proof}

\section*{Acknowledgement}
This work was supported by the National Natural Science Foundation  of China, Grant No. 11501212. 
The research of Min Sha was also supported by the Macquarie University Research Fellowship.


\begin{thebibliography}{10}


\bibitem{AGP}
W.R. Alford, A. Granville and C. Pomerance, There are infinitely many Carmichael numbers,
Ann. Math. 139 (1994), 703--722.

\bibitem{Apostol}
T.M. Apostol, Introduction to analytic number theory, Springer, New York, 1976.


\bibitem{BP}
W.D. Banks and C. Pomerance, On Carmichael numbers in arithmetic progressions, J. Aust. Math. Soc. 88 (2010), 313--321.

\bibitem{Bilharz}
H. Bilharz, Primdivisoren mit vorgegebener Primitvwurzel,  Math. Ann. 114 (1937), 476--492. 

\bibitem{Erdos}
P. Erd{\"o}s, On pseudoprimes and Carmichael numbers,  Publ. Math. Debrecen. 4 (1956),  201--206.   





\bibitem{Harman05}
G. Harman, On the number of Carmichael numbers up to $x$, Bull. Lond. Math. Soc. 37 (2005), 641--650.

\bibitem{Harman08}
G. Harman, Watt's mean value theorem and Carmichael numbers, Int. J. Number Theory 4 (2008), 241--248.
 
\bibitem{Howe}
E.W. Howe, Higher-order Carmichael numbers, Math. Comp. 69 (2000), 1711--1719.

\bibitem{Hsu} 
C. Hsu, On Carmichael polynomials, J. Number Theory 71 (1998), 257--274.  

\bibitem{Jacobson}
N. Jacobson, Structure theory for algebraic algebras of bounded degree, Ann. Math. 46 (1945), 695--707.

\bibitem{Knodel}
W. Kn{\"o}del, Eine obere Schranke f{\"u}r die Anzahl der Carmichaelschen Zahlen kleiner als $x$, Arch. Math. 4 (1953), 282--284.


\bibitem{Mat}
K. Matom{\"a}ki, On Carmichael numbers in arithmetic progressions, J. Aust. Math. Soc. 94 (2013), 268--275.

\bibitem{Pollack}
 P. Pollack, Irreducible polynomials with several prescribed coefficients, 
 Finite Fields Appl. 22 (2013), 70--78.
 
\bibitem{Robin}
G. Robin, Grandes valeurs de la fonction somme des diviseurs et hypoth{\`e}se de Riemann, J. Math. Pures Appl. 63 (1984), 187--213.

\bibitem{Rosen}
M. Rosen, Number theory in function fields, Springer-Verlag, New York, 2002.

\bibitem{Sch}
R. Scheidler, Ideal arithmetic and infrastructure in purely cubic function fields, 
 J. Th{\' e}or. Nombres Bordeaux 13 (2001),  609--631.
 
\bibitem{Schettler} 
J. Schettler, Lehmer's totient problem and Carmichael numbers in a PID,
available at \url{http://web.math.ucsb.edu/~jcs/Schettler.pdf}.

\bibitem{Sloane}
N.J.A. Sloane, The On-Line Encyclopedia of Integer Sequences, \url{https://oeis.org/A002997}.  

\bibitem{Steele}
G.A. Steele, Carmichael numbers in number rings, 
J. Number Theory 128 (2008), 910--917.

\bibitem{Villa}
G.D. Villa Salvador, Topics in the theory of algebraic function  fields,  Birkh{\"a}user, Boston, 2006. 

\bibitem{Wright13}
T. Wright, Infinitely many Carmichael numbers in arithmetic progressions, Bull. London Math. Soc. 45 (2013), 943--952.

\bibitem{Wright16}
T. Wright, Factors of Carmichael numbers and a weak $k$-tuples conjecture, J. Aust. Math. Soc. 100 (2016), 421--429. 
\end{thebibliography}
\end{document}